\newtheorem{theorem}{Theorem}
\newtheorem{corollary}{Corollary}
\newtheorem{lemma}{Lemma}
\newtheorem{proposition}{Proposition}
\newenvironment{proof}[1][Proof]{\noindent\textbf{#1.} }{\ \rule{0.5em}{0.5em}}
\begin{document}

\title{Central limit theorems for smoothed extreme value estimates\\of \ Poisson point processes boundaries}
\author{St\'ephane Girard$^{1}$ \& Ludovic Menneteau$^{2}$}
\date{ $^1$ SMS/LMC, Universit\'e Grenoble 1,\\
BP 53, 38041 Grenoble Cedex 9, France.\\
{\tt Stephane.Girard@imag.fr} \\
\vspace*{2mm}
$^2$ D\'epartement de Math\'ematiques,
Universit\'e Montpellier 2,\\
Place Eug\`ene Bataillon, 34095 Montpellier Cedex 5, France.
{\tt mennet@math.univ-montp2.fr\\}
}
\maketitle
\begin{abstract}
In this paper, we give sufficient conditions to establish central limit
theorems for boundary estimates of Poisson point processes. The considered
estimates are obtained by smoothing some bias corrected extreme values of the
point process. We show how the smoothing leads Gaussian asymptotic
distributions and therefore pointwise confidence intervals. Some
new unidimensional and multidimensional examples are provided. \\
\newline \textbf{Keywords:} Functional estimate, Central limit theorem,
Extreme values, Poisson process, Boundary estimation.\newline \textbf{AMS
Subject Classification:} Primary 60G70; Secondary 62M30, 62G05, 62G20.\newline 
\end{abstract}

\section{Introduction}

Many proposals are given in the literature for estimating a set $S$ given a
finite random set $N$ of points drawn from the interior. This problem of
support estimation arises in classification (\textsc{Hardy} \& \textsc{Rasson}%
~(1982)), clustering problems (\textsc{Hartigan}~(1975)), discriminant
analysis (\textsc{Baufays} \& \textsc{Rasson}~(1985)), and outliers detection
(\textsc{Devroye} \& \textsc{Wise}~(1980)). Applications are also found in
image analysis. For instance, the segmentation problem can be considered under
the support estimation point of view, where the support is a connex bounded
set in $\mathbb{R}^{2}$. We also point out some applications in econometrics
(e.g. \textsc{Deprins}, \textit{et al}~(1984)). In such cases, the unknown
support can be written
\begin{equation}
S=\{(x,y):x\in E\;;\;0\leq~y~\leq~f(x)\}, \label{defS}%
\end{equation}
where $f$ is an unknown function and $E$ an arbitrary set. The set $S$ is
often called a boundary fragment, see \textsc{Korostelev} \& \textsc{Tsybakov}%
~(1993), Chapter~3. Then, the problem reduces to estimating $f$, sometimes
called the production frontier (see for instance \textsc{H\"{a}rdle}
\textit{et al}~(1995a)). The data consist of pair $(X,Y)$ where $X$ represents
the input, possibly multidimensional (labor, energy or capital), used to
produce an output $Y$ in a given firm. In such a framework, the value $f(x)$
can be interpreted as the maximum level of output which is attainable for the
level of input $x$. \textsc{Korostelev} \textit{et al}~(1995) suppose $f$ to
be increasing and concave, from economical considerations, which suggests an
adapted estimator, called the DEA (Data Envelopment Analysis) estimator. Its
asymptotic distribution is established by \textsc{Gijbels} \textit{et al}~(1999).

\noindent Here, $N$ is a Poisson point process, with observed points belonging
to a subset $S$ defined as in (\ref{defS}) where $f$ is an unknown function
which needs not to be monotone. An early paper was written by \textsc{Geffroy}%
~(1964) for independent identically distributed observations from a density.
The proposed estimator is a kind of histogram based on the extreme values of
the sample. This work was extended in two main directions.

\begin{enumerate}
\item [(a)]On the one hand, piecewise polynomials were introduced and their
optimality in an asymptotic minimax sense is proved under weak assumptions on
the rate of decrease $\beta$ of the density towards 0 and on the number $q$ of
continuous derivatives of $f$ by \textsc{Korostelev} \& \textsc{Tsybakov}%
~(1993) and by \textsc{H\"ardle} \textit{et al}~(1995b). The asymptotic
distribution is established by \textsc{Hall} \textit{et al}~(1998). Extreme
values methods are proposed by \textsc{Hall} \textit{et al}~(1997) and by
\textsc{Gijbels} \& \textsc{Peng}~(1999) to estimate the parameter $\beta$.

\item[(b)] On the other hand, different propositions for smoothing Geffroy's
estimate were made. \textsc{Girard} \& \textsc{Jacob}~(2001, 2003a, 2003b)
introduced estimates based on kernel regressions and on projection methods. In
the same spirit, \textsc{Gardes}~(2002) proposed a Faber-Shauder estimate. In
each case, the consistency and the limit distribution of the estimator are established.
\end{enumerate}

\noindent\ Finally, the work of \textsc{Mammen \& Tsybakov}~(1995) offers a
general framework for comparing the estimates of type (a) or (b). The optimal
rates of convergence are derived for estimates of boundaries which have a
smooth parametrisation. The existence of estimates reaching these optimal
rates of convergence is proved by the minimization of contrast criteria over
classes of functions.

\noindent Here, we introduce new estimates of type (b). The considered
estimates are obtained by smoothing the bias corrected extreme values of the
Poisson process (see {\sc Menneteau}~(2003a) for related work in the iid
setting). 

This approach offers several advantages. First, the bias
correction allows to overcome the classical limitation due to the fact that
the data lie below the boundary. Second, the smoothing permits to obtain
Gaussian asymptotic distributions. Therefore, it is straightforward to obtain
pointwise confidence intervals for $f(x)$ all the more so as our estimates
benefit from explicit forms and are easy to compute. Finally, let us note that
our estimates offer new features compared to those quoted in (b): i) They are
not dedicated to unidimensional boundary estimation problems since there is
no restriction on the set $E$ in (\ref{defS}),
ii) the bias correction is
different and thus, iii) the intensity measure of the point process can be more general,
iv) the smoothing is achieved with more general weight functions allowing
v) better speeds of convergence than the previous estimates quoted in (b).

\section{The boundary estimate}

Let $(E,\mathcal{E},\nu)$ be a probability space and $f:(E,\mathcal{E}%
)\rightarrow\left(  \mathbb{R}^+,\mathcal{B}\left(  \mathbb{R}^+\right)  \right)
$ a measurable function, where $\mathcal{B}\left(  \mathbb{R}\right)  $ is the
Borel $\sigma$-algebra on $\mathbb{R}$. Consider $S=\{(x,y)\in E\times
\mathbb{R},0\leq y\leq f(x)\}$ and the sequence of Poisson point processes
\[
N_{n}=\left\{  N_{n}\left(  D\right)  :D\in\mathcal{E}\otimes\mathcal{B}%
\left(  \mathbb{R}^+\right)  \right\}  ,\;\;n\geq1,
\]
with intensity measure
\begin{equation}
nc(\nu\otimes\lambda)\,\mathbf{1}_{S}, \label{int}%
\end{equation}
where $c>0$, and $\lambda$ is the Lebesgue measure on $\mathbb{R}^{+}$. Let
$\{(X_{n,i},Y_{n,i}),\;1\leq i\leq N_{n}(S)\}$ be the set of points associated
to the point process. Our aim is then to estimate $S$ via an estimation of
$f$. Let $k_{n}\uparrow\infty$ and denote by $\{I_{n,r}:\;1\leq r\leq k_{n}\}$
a measurable partition of $E$. For all $1\leq r\leq k_{n}$, note $\nu
_{n,r}=\nu(I_{n,r})$,
\[
D_{n,r}=\{(x,y):x\in I_{n,r},\;0\leq y\leq f(x)\}
\]
the cell of $S$ built on $I_{n,r}$ and $N_{n,r}=N_{n}(D_{n,r})$. We introduce
the extreme values
\[
Y_{n,r}^{\ast}=\max\{Y_{n,i}:(X_{n,i},Y_{n,i})\in D_{n,r}\},
\]
if $N_{n,r}\neq0$ and $Y_{n,r}^{\ast}=0$ otherwise. In the following, the
convention $0\times\infty=0$ is adopted. For $x\in E,$ our estimator of
$f\left(  x\right)  $ is
\begin{equation}
\widehat{f}_{n}\left(  x\right)  =\sum_{r=1}^{k_{n}}\nu_{n,r}\kappa
_{n,r}(x)\left(  1+\frac{1}{N_{n,r}}\right)  Y_{n,r}^{\ast}, \label{defest}%
\end{equation}
where $\kappa_{n,r}:E\rightarrow\mathbb{R}$ is a weighting function
determining the nature of the smoothing introduced in the estimate. In the
next section, some general conditions are imposed on $\kappa_{n,r}$ and
examples are provided in Section~\ref{applications}. It is well-known that
$Y_{n,r}^{\ast}$ is an estimator of the maximum of $f$ on $I_{n,r}$ with
negative bias. The use of the random variable $(1+N_{n,r}^{-1})Y_{n,r}^{\ast}$
allows to reduce this bias. This bias correction is motivated by the remark
that, conditionally on $N_{n,r}$, $Y_{n,r}^{\ast}$ has approximatively the
same distribution as the maximum of $N_{n,r}$ independent random variables
uniformly distributed on $[0,\min\{f(x):x\in I_{n,r}\}]$ (see Lemma \ref{mom}
ii) below). Therefore, $\widehat{f}_{n}$ appears as a linear combination of
extreme value estimates of sampled values of $f$. The asymptotic properties of
$\widehat{f}_{n}$ are established in Section~\ref{secmain}, and proved in
Section~\ref{proofs}. Illustrations are presented in
Section~\ref{applications} with general kernel estimates including
Parzen-Rosenblatt and Dirichlet kernels.  

\section{Main results \label{secmain}}

Define $\nu_{n}=\min\{\nu_{n,r},\;1\leq r\leq k_{n}\}$ and
\[
\kappa_{n}(x)=\left(  \sum_{r=1}^{k_{n}}\kappa_{n,r}^{2}(x)\right)
^{1/2},\;x\in E.
\]
Let $m=\sup\{\alpha>0:\nu\left(  \left\{  f<\alpha\right\}  \right)  =0\}$ and
$M=\inf\{\alpha>0:\nu\left(  \left\{  f>\alpha\right\}  \right)  =0\}$ be the
$\nu$-essential infimum and supremum of $f$ on $E.$ Similarly, for all $1\leq
r\leq k_{n},$ let
\[
m_{n,r}=\sup\{\alpha>0:\nu\left(  \left\{  f<\alpha\right\}  \cap
I_{n,r}\right)  =0\},
\]%
\[
M_{n,r}=\inf\{\alpha>0:\nu\left(  \left\{  f>\alpha\right\}  \cap
I_{n,r}\right)  =0\}
\]
and
\[
f_{n,r}=\nu_{n,r}^{-1}\;\int_{I_{n,r}}f\;d\nu
\]
be respectively the $\nu$-essential infimum, the $\nu$-essential supremum and
the mean value of $f$ on $I_{n,r}$ and define the
$\nu$-essential oscillation of $f$ on $I_{n,r}$ by
\[
\Delta_{n}=\max\{M_{n,r}-m_{n,r},\;1\leq r\leq k_{n}\}.
\]
Let us highlight that, in most applications (see Section~\ref{applications}),
$E$ is a subset of $\mathbb{R}^{d}$, $\nu$ is absolutely continuous with
respect to Lebesgue measure and $f$ is continuous. Hence, all essential
infima and essential suprema considered below reduce to the classical minima
and maxima. \\ Finally, set $w_{n,r}(x)=\kappa_{n,r}\left(  x\right)
/\kappa_{n}(x).$ We consider the following series of assumptions:\newline
$\left(  \mathrm{H.1}\right)  \;k_{n}\uparrow\infty$ and $n\nu_{n}%
\rightarrow\infty$ as $n\rightarrow\infty$.\medskip\newline $\left(
\mathrm{H.2}\right)  \;$~$0<m\leq M<+\infty$ and
\[
\delta_{n}:=\max_{1\leq r\leq k_{n}}\nu_{n,r}(M_{n,r}-m_{n,r})=o\left(
{1/n}\right)  \;\mathrm{as\;}n\rightarrow\infty.
\]
There exists $F\subset E$ such that: \medskip\newline $\left(  \mathrm{H.3}%
\right)  \;$For each $(x_{1},...,x_{p})\subset F$, there exists a covariance
matrix $\Sigma_{(x_{1},\dots,x_{p})}=[\sigma(x_{i},x_{j})]_{1\leq i,j\leq p}$
in ${}\mathbb{R}^{p}$ such that for all $1\leq i$, $j\leq p$,
\[
\sum_{r=1}^{k_{n}}w_{n,r}(x_{i})w_{n,r}(x_{j})\rightarrow\;\sigma(x_{i}%
,x_{j})\;\mathrm{as\;}n\rightarrow\infty.
\]
$\left(  \mathrm{H.4}\right)  \;$For all $x\in F$,
\[
\max_{1\leq r\leq k_{n}}|w_{n,r}(x)|\rightarrow0\;\mathrm{as\;}n\rightarrow
\infty.
\]
$\left(  \mathrm{H.5}\right)  \;$For all $x\in F$,
\[
\left|  \sum_{r=1}^{k_{n}}\nu_{n,r}\kappa_{n,r}(x)f_{n,r}-f(x)\right|
=o\left(  {\frac{\kappa_{n}(x)}{n}}\right)  \;\mathrm{as\;}n\rightarrow
\infty.
\]
$\left(  \mathrm{H.6}\right)  \;$For all $x\in F$,
\[
\sum_{r=1}^{k_{n}}|w_{n,r}(x)|\max\left(  \left(  n\delta_{n}\right)
^{2},n\nu_{n}\exp\left(  -mcn\nu_{n}\right)  ,\Delta_{n}\right)
\rightarrow0\;\mathrm{as\;}n\rightarrow\infty.
\]

\noindent Before proceeding, let us comment on the assumptions. $\left(
\mathrm{H.1}\right)  $--$\left(  \mathrm{H.4}\right)  $~are devoted to the
control of the centered estimator $\widehat{f}_{n}\left(  x\right)
-\mathbb{E}(\widehat{f}_{n}\left(  x\right)  )$. Assumption $\left(
\mathrm{H.1}\right)  $~imposes that the mean number of points in each
$D_{n,r}$ goes to infinity. $\left(  \mathrm{H.2}\right)  $\ requires the
unknown function $f$ to be bounded away from 0. It also imposes that the mean
number of points in $D_{n,r}$ above $m_{n,r}$ converges to 0. Note that
$\left(  \mathrm{H.1}\right)  $~and $\left(  \mathrm{H.2}\right)  $~force the
$\nu$-essential oscillation of $f$ on $I_{n,r}$ to converge uniformly to~0:
$\Delta_{n}\rightarrow0$ as $n\rightarrow\infty$. $\left(  \mathrm{H.3}%
\right)  $ is devoted to the multivariate aspects of the limit theorems.
$\left(  \mathrm{H.4}\right)  $ imposes to the weight functions $\kappa
_{n,r}(x)$ in the linear combination (\ref{defest}) to be approximatively of
the same order. This is a natural condition to obtain an asymptotic Gaussian
behavior. These assumptions are easy to verify in practice since they involve
either $f(x)$ or $\kappa_{n,r}(x)$ without mixing these two quantities.
Assumptions $\left(  \mathrm{H.5}\right)  $~and $\left(  \mathrm{H.6}\right)
$~are devoted to the control of the bias term $\mathbb{E}(\widehat{f}%
_{n}\left(  x\right)  )-f(x)$. They prevent it to be too important with
respect to the variance of the estimate (which will reveal to be of order
$\kappa_{n}(x)/n$). Consequently, these two assumptions involve both the
unknown function $f(x)$ and the weight functions $\kappa_{n,r}(x)$. Finally,
$\left(  \mathrm{H.6}\right)  $~can be looked at as a stronger version of
$\left(  \mathrm{H.2}\right)  $.

\noindent Our first result states the multivariate central limit theorem for
$\widehat{f}_{n}(x).$

\begin{theorem}
\label{TCL}Under assumptions $\left(  \mathrm{H.1}\right)  $-$\left(
\mathrm{H.6}\right),$ and for all $\left(  x_{1},...,x_{p}\right)  \subset
F,$%
\[
\left\{  \frac{nc}{\kappa_{n}\left(  x_{j}\right)  }\left(  \widehat{f}%
_{n}\left(  x_{j}\right)  -f\left(  x_{j}\right)  \right)  :1\leq j\leq
p\right\}  \underset{\mathcal{D}}{\rightarrow}N\left(  0,\Sigma_{\left(
x_{1},...,x_{p}\right)  }\right)  ,
\]
where $c$ is defined in $\left(  \ref{int}\right)  ,$\ $\underset{\mathcal{D}%
}{\rightarrow}$ denotes the convergence in distribution and $N\left(
0,\Sigma_{\left(  x_{1},...,x_{p}\right)  }\right)  $ is the centered Gaussian
distribution in $\mathbb{R}^{p},$ with covariance matrix $\Sigma_{\left(
x_{1},...,x_{p}\right)  }.$
\end{theorem}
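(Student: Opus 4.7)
The plan is to apply the Cram\'er--Wold device: given $t_1,\dots,t_p\in\mathbb{R}$, set
\[
T_n \;=\; \sum_{j=1}^{p} t_j\,\frac{nc}{\kappa_n(x_j)}\bigl(\widehat{f}_n(x_j)-f(x_j)\bigr)
\]
and show that $T_n$ is asymptotically $\mathcal{N}(0,\sum_{i,j}t_it_j\sigma(x_i,x_j))$. I would split $T_n = V_n + B_n$ into the stochastic fluctuation and the deterministic bias, and treat the two parts separately.

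For the bias $B_n$, the key input is Lemma~\ref{mom}, which evaluates $\mathbb{E}[(1+1/N_{n,r})Y_{n,r}^{\ast}]$ explicitly. Its leading value is $m_{n,r}$, modulo three corrections: an $e^{-mcn\nu_n}$ defect coming from the event $\{N_{n,r}=0\}$ (on which the convention $0\cdot\infty=0$ sets the corrected extremum to zero), a contamination of order $(n\delta_n)^2$ from points in the strip $I_{n,r}\times(m_{n,r},M_{n,r}]$, and an oscillation of order $\Delta_n$. Since $|m_{n,r}-f_{n,r}|\le\Delta_n$, this delivers
\[
\mathbb{E}\widehat{f}_n(x) \;=\; \sum_{r=1}^{k_n}\nu_{n,r}\kappa_{n,r}(x)f_{n,r} + R_n(x),
\]
with $|R_n(x)|\le C\,\kappa_n(x)\,\max\bigl((n\delta_n)^2,\,n\nu_n e^{-mcn\nu_n},\,\Delta_n\bigr)\sum_r|w_{n,r}(x)|$. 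Assumption (H.5) then absorbs the principal sum into $f(x)+o(\kappa_n(x)/n)$ while (H.6) forces $R_n(x)=o(\kappa_n(x)/n)$, so $B_n\to 0$.

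For the stochastic part $V_n$, the crucial structural fact is that the restrictions of the Poisson process $N_n$ to the pairwise disjoint cells $D_{n,r}$ are independent. Hence the random vectors $\bigl(Z_{n,r}(x_1),\dots,Z_{n,r}(x_p)\bigr)$ with $Z_{n,r}(x)=\nu_{n,r}\kappa_{n,r}(x)(1+N_{n,r}^{-1})Y_{n,r}^{\ast}$ are independent in $r$, so that $V_n=\sum_r\xi_{n,r}$ with $\xi_{n,r}=\sum_j t_j(nc/\kappa_n(x_j))(Z_{n,r}(x_j)-\mathbb{E}Z_{n,r}(x_j))$ is a sum of independent centered random variables. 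A second application of Lemma~\ref{mom} yields, at leading order, $\mathrm{Var}[(1+1/N_{n,r})Y_{n,r}^{\ast}]\sim 1/(nc\nu_{n,r})^2$, whence $\sum_r\mathrm{Var}(\xi_{n,r})\to\sum_{i,j}t_it_j\sigma(x_i,x_j)$ via (H.3), while $\max_r\mathrm{Var}(\xi_{n,r})\to 0$ via (H.4). Combined with a fourth-moment bound from Lemma~\ref{mom}, this establishes the Lyapunov condition and the Lindeberg--Feller CLT delivers the Gaussian limit for $V_n$, hence for $T_n$.

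The main obstacle is the moment work buried in Lemma~\ref{mom}: conditionally on $N_{n,r}$, the law of $Y_{n,r}^{\ast}$ is only \emph{approximately} that of the maximum of $N_{n,r}$ uniforms on $[0,m_{n,r}]$, because a small number of points may fall in the strip $I_{n,r}\times(m_{n,r},M_{n,r}]$. Quantifying this approximation uniformly in $r$, together with the $\{N_{n,r}=0\}$ atom created by the bias correction $1+1/N_{n,r}$, is exactly what dictates the precise form of the three terms inside the maximum in (H.6) and explains why (H.6) is essentially a strengthening of (H.2).
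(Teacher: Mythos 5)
Your proposal is correct and follows essentially the same route as the paper: Cram\'er--Wold reduction, decomposition into a centered stochastic part and a bias part, use of independence of the cells, and verification of a classical CLT criterion using the uniform moment bounds of Lemma~\ref{mom}. The only cosmetic difference is that you propose verifying Lyapunov's condition via a fourth-moment bound, whereas the paper (via Theorem~\ref{GenTCL} and Lemma~\ref{moml}\,i) verifies the Lindeberg condition through a uniform-integrability estimate (A.5) obtained from the sub-exponential moment control $\mathbb{E}\left|\xi_{n,r}-\mathbb{E}\xi_{n,r}\right|^{\ell}\lesssim 6^{\ell}\ell!$; both are equally available here and lead to the same conclusion.
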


\noindent In practice, $c$ is not known and has to be estimated. In this aim,
we introduce $\widehat{c}_{n}=N_{n}(S)/(n\widehat{a}_{n})$, where
\[
\widehat{a}_{n}=\ \sum_{r=1}^{k_{n}}\nu_{n,r}\left(  1+\frac{1}{N_{n,r}%
}\right)  Y_{n,r}^{\ast}%
\]
is an estimator of $a=\int_{E}fd\nu$. We then have the following corollary:

\begin{corollary}
\label{coromain}Theorem \ref{TCL} holds when $c$ is replaced by $\widehat
{c}_{n}$.
\end{corollary}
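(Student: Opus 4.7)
The plan is to deduce the corollary from Theorem \ref{TCL} via Slutsky's lemma. Writing
\[
\frac{n\widehat{c}_n}{\kappa_n(x_j)}\bigl(\widehat{f}_n(x_j)-f(x_j)\bigr) = \frac{\widehat{c}_n}{c}\cdot\frac{nc}{\kappa_n(x_j)}\bigl(\widehat{f}_n(x_j)-f(x_j)\bigr),
\]
one sees that the vector indexed by $j$ on the right already converges in distribution to $N(0,\Sigma_{(x_1,\dots,x_p)})$ by Theorem \ref{TCL}. Since the scalar factor $\widehat{c}_n/c$ is common to all $p$ coordinates, it suffices to prove $\widehat{c}_n/c\to 1$ in probability; Slutsky's theorem then transfers the Gaussian limit.

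To prove this, I would factor
\[
\widehat{c}_n \;=\; \frac{N_n(S)}{na}\cdot\frac{a}{\widehat{a}_n}
\]
and handle the two pieces separately. The first is the easier: $N_n(S)$ is Poisson with parameter $nc\int_E f\,d\nu = nca$, so its mean and variance both equal $nca$, and Chebyshev's inequality together with $n\nu_n\to\infty$ from $(\mathrm{H.1})$ (which in particular forces $n\to\infty$) yields $N_n(S)/(na)\to c$ in probability.

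The second factor requires showing $\widehat{a}_n\to a$ in probability, which I would achieve by bounding $\mathbb{E}[\widehat{a}_n]-a$ and $\mathrm{Var}[\widehat{a}_n]$ directly. Observe that $\widehat{a}_n$ is exactly the quantity defining $\widehat{f}_n(x)$ if one replaces the weights $\kappa_{n,r}(x)$ by $1$, so the moment bounds of Lemma \ref{mom} apply term by term. The bias-corrected conditional expectation $\mathbb{E}[(1+1/N_{n,r})Y^*_{n,r}\mid N_{n,r}]$ is close to $M_{n,r}$, so combining with $|M_{n,r}-f_{n,r}|\le\Delta_n$ and $(\mathrm{H.2})$ gives $\mathbb{E}[\widehat{a}_n]=\sum_r\nu_{n,r}f_{n,r}+o(1)=a+o(1)$. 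The independence of $N_n$ across the disjoint cells $D_{n,r}$ makes the summands in $\widehat{a}_n$ independent, and each has variance of order $\nu_{n,r}^2/(n\nu_{n,r})$, whence $\mathrm{Var}[\widehat{a}_n]=O(1/n)\to 0$.

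The main obstacle is the treatment of the event $\{N_{n,r}=0\}$, where the correction factor $1/N_{n,r}$ is not defined: its probability is $\exp(-cn\nu_{n,r}f_{n,r})\le\exp(-mcn\nu_n)$, whose negligibility is precisely what the exponential term in $(\mathrm{H.6})$ and the convention $0\times\infty=0$ are designed to ensure. Once this is absorbed into the moment bounds of Lemma \ref{mom} (as it must already be in the proof of Theorem \ref{TCL}), the two convergences combine to give $\widehat{c}_n\to c$ in probability, and Slutsky's theorem concludes.
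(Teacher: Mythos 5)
Your high-level plan coincides with the paper's: prove $\widehat{c}_n\to c$ in probability and then transfer the Gaussian limit (the paper formulates this by showing directly that the remainder $D_n(x)=\tfrac{n}{\kappa_n(x)}(\widehat{c}_n-c)(\widehat{f}_n(x)-f(x))$ is $o_{\mathbb{P}}(1)$, which is just Slutsky in disguise). Where you diverge is in how $\widehat{c}_n\to c$ is obtained. The paper proves a stronger result, Lemma~\ref{remplace1}: an exponential tail bound $\mathbb{P}(|\widehat{c}_n-c|\geq\delta)\leq 3\exp(-n\alpha_\delta)$, using Cram\'er's theorem for $N_n(S)/(na)$ and Bernstein's inequality (fed by the uniform centered-moment bound of Lemma~\ref{moml}~i)) for $\widehat{a}_n$. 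You instead propose elementary second-moment/Chebyshev bounds, which give only convergence in probability; that is weaker but entirely sufficient for Corollary~\ref{coromain}. The paper's exponential route buys almost-sure convergence of $\widehat{c}_n$ (via Borel--Cantelli), which it explicitly advertises; yours is lighter but gives up that by-product.

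Two small precision issues worth noting. First, your claimed order $\mathrm{Var}(\widehat{a}_n)=O(1/n)$ with summand variance $\nu_{n,r}^2/(n\nu_{n,r})$ is not quite right: since $\widehat{a}_n=(nc)^{-1}\sum_r\xi_{n,r}$ with independent $\xi_{n,r}$ and $\mathbb{V}(\xi_{n,r})\to 1$ uniformly (Lemma~\ref{moml}~ii)), the correct order is $k_n/(nc)^2=o(1/n)$ under $(\mathrm{H.1})$; your bound is an overestimate, though still $o(1)$, so the conclusion survives. Second, the control of the bias $\mathbb{E}(\widehat{a}_n)-a$ is cleanest if you lean on Lemma~\ref{moml}~iii), exactly as the paper does: $\mathbb{E}(\widehat{a}_n)-a=(nc)^{-1}\sum_r(\mathbb{E}(\xi_{n,r})-\mu_{n,r})$, so $|\mathbb{E}(\widehat{a}_n)-a|\leq\tfrac{k_n}{nc}\max_r|\mathbb{E}(\xi_{n,r})-\mu_{n,r}|=o(1)$; your sketch via the conditional expectation being ``close to $M_{n,r}$'' should read $m_{n,r}$ (the conditional maximum, given $N_{n,r}^-$, is that of uniforms on $[0,m_{n,r}]$, cf.\ Lemma~\ref{mom}~ii)), with the contribution of $D^+_{n,r}$ handled separately. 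Finally, the event $\{N_{n,r}=0\}$ is not an obstacle at all: the convention $0\times\infty=0$ makes the summand zero there, so no extra argument is needed beyond what Lemma~\ref{mom} already absorbs.
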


\noindent For all $x\in F,$ this leads to an explicit asymptotic $\gamma$th
confidence interval for $f(x)$:
\[
\left[  \sum_{r=1}^{k_{n}}\nu_{n,r}\left(  \kappa_{n,r}(x)-z_{\gamma}%
\frac{\kappa_{n}(x)}{N_{n}(S)}\right)  \left(  1+\frac{1}{N_{n,r}}\right)
Y_{n,r}^{\ast},\sum_{r=1}^{k_{n}}\nu_{n,r}\left(  \kappa_{n,r}(x)+z_{\gamma
}\frac{\kappa_{n}(x)}{N_{n}(S)}\right)  \left(  1+\frac{1}{N_{n,r}}\right)
Y_{n,r}^{\ast}\right]  ,
\]
where $z_{\gamma}$ is the $(\gamma+1)/2$th quantile of the $N(0,1)$
distribution. Note that the computation of this interval does not require a
bootstrap procedure as for instance in \textsc{Hall} \textit{et al}~(1998).

\noindent\textbf{Remark.} In the case where the measure $\nu$ is unknown, it
is natural to introduce the boundary estimate:
\begin{equation}
\overset{\circ}{f}_{n}(x)=\sum_{r=1}^{k_{n}} \overset{\circ}{\nu}_{n,r}\kappa
_{n,r}(x)\left(  1+\frac{1}{N_{n,r}}\right)  Y_{n,r}^{\ast}, \label{ftilde}%
\end{equation}
where $\overset{\circ}{\nu}_{n,r}$ is an estimator of $\nu_{n,r}$. If no prior
information is available on $\nu$, one can use the following non-parametric
estimate:
\[
\overset{\circ}{\nu}_{n,r}=\frac{N_{n,r}}{ncY_{n,r}^{\ast}(1+N_{n,r}^{-1})},
\]
leading to
\[
\overset{\circ}{f}_{n}(x)=\sum_{r=1}^{k_{n}}
\kappa _{n,r}(x) \frac{N_{n,r}}{nc},
\]
which has been first introduced by \textsc{Jacob} \& \textsc{Suquet}~(1995)
with a particular choice of the weighting function $\kappa_{n,r}$ and when
$\nu$ is the Lebesgue measure. Let us note that Theorem~\ref{TCL} does not
hold for $\overset{\circ}{f}_{n}$ since it converges slower than
${\hat f}_{n}$, see \textsc{Jacob} \& \textsc{Suquet}~(1995),
Theorem~7. If $\nu$ is assumed to belong to a parametric family, another
versions of (\ref{ftilde}) can be used, leading to semi-parametric estimates
of $f$.

\section{Proofs of the main results \label{proofs}}

The proofs are built as follows. First, we establish a multivariate central
limit theorem for the finite dimensional projection of the centered process
\[
\frac{nc}{\kappa_{n}\left(  x\right)  }\left(  \widehat{f}_{n}\left(
x\right)  -\mathbb{E}\left(  \widehat{f}_{n}\left(  x\right)  \right)
\right)  ,\;\;x\in F
\]
(see Proposition~\ref{TCLseul} below). To this aim, by the general framework
of the appendix (Theorem~\ref{GenTCL}) it is sufficient to control the
centered moments of
\[
\xi_{n,r}=\left(  1+\frac{1}{N_{n,r}}\right)  Z_{n,r}^{\ast}=\left(
1+\frac{1}{N_{n,r}}\right)  nc\nu_{n,r}Y_{n,r}^{\ast}.
\]
This is achieved in Lemma \ref{moml}. In a second time, we establish that the
bias term
\[
\frac{nc}{\kappa_{n}\left(  x\right)  }\left(  \mathbb{E}\left(  \widehat
{f}_{n}\left(  x\right)  \right)  -f\left(  x\right)  \right)
\]
vanishes when $n\uparrow\infty$ (see Proposition \ref{centr}). Finally, we
prove in Lemma \ref{remplace1} that $c$ can be replaced by $\widehat{c}_{n}$
in the multivariate central limit theorem. Before that, we introduce some new
notations and definitions needed for our proofs. For all $1\leq r\leq k_{n}$,
each cell $D_{n,r}$ can be splitted as $D_{n,r}=\widetilde{D}_{n,r}\cup
D_{n,r}^{-} \cup D_{n,r}^{+}$ where

\begin{itemize}
\item $\widetilde{D}_{n,r}=\left\{  \left(  x,y\right)  \in I_{n,r}%
\times[0,m_{n,r}],\;f\left(  x\right)  <m_{n,r}\right\}  $,

\item $D_{n,r}^{-}=\left\{  \left(  x,y\right)  \in D_{n,r},\;0\leq y\leq
m_{n,r}\right\}  =\left(  I_{n,r}\times\left[  0,m_{n,r}\right]  \right)
\diagdown\widetilde{D}_{n,r},$

\item $D_{n,r}^{+}=\left\{  \left(  x,y\right)  :x\in I_{n,r},\;m_{n,r}<y\leq
f\left(  x\right)  \right\}  .$
\end{itemize}
Moreover, for all $1\leq r\leq k_{n}$, set
\begin{itemize}
\item $\lambda_{n,r}=nc\nu_{n,r}m_{n,r}$,

\item $\mu_{n,r}=nc\nu_{n,r}f_{n,r}$,

\item $N_{n,r}^{-}=N_{n}\left(  D_{n,r}^{-}\right)  $, $N_{n,r}^{+}%
=N_{n}\left(  D_{n,r}^{+}\right)  $,

\item $Z_{n,r}^{-}=nc\nu_{n,r}\max\{Y_{n,i}:(X_{n,i},Y_{n,i})\in D_{n,r}%
^{-}\}$, if $N_{n,r}^{-}\neq0$, and $Z_{n,r}^{-}=0$ otherwise,\newline
$Z_{n,r}^{+}=nc\nu_{n,r}\max\{Y_{n,i}:(X_{n,i},Y_{n,i})\in D_{n,r}^{+}\}$, if
$N_{n,r}^{+}\neq0$, and $Z_{n,r}^{+}=0$ otherwise,

\item $\xi_{n,r}^{-}=\left(  1+\frac{1}{N_{n,r}^{-}}\right)  Z_{n,r}^{-}.$
\end{itemize}

\noindent Some technical results are collected in Lemma~\ref{mom}. The second
of them is the key tool for proving the following ones. It states that,
conditionally on $N_{n,r}^{-}$, $Z_{n,r}^{-}$ has the same distribution as the
maximum of $N_{n,r}^{-}$ independent random variables uniformly distributed on
$[0,\lambda_{n,r}]$. This motivates the bias correction in (\ref{defest}).

\begin{lemma}
\label{mom}Under assumptions $\left(  \mathrm{H.1}\right)  $\ and\ $\left(
\mathrm{H.2}\right)  $\ we have\newline i) $\underset{1\leq r\leq k_{n}}{\max
}\mathbb{P}\left(  N_{n,r}^{+}>0\right)  =O\left(  n\delta_{n}\right)
=o\left(  1\right)  .$ \newline ii) For all $1\leq r\leq k_{n},$ and any
$t\in\left[  0,\lambda_{n,r}\right]  $, $\mathbb{P}\left(  Z_{n,r}^{-}\leq
t\mid N_{n,r}^{-}\right)  =\left(  \frac{t}{\lambda_{n,r}}\right)
^{N_{n,r}^{-}}$. \newline iii) For all $1\leq r\leq k_{n},$ and any
$t\in\left[  0,\lambda_{n,r}\right]  $, $\mathbb{P}\left(  Z_{n,r}^{-}\leq
t\right)  =\exp(t-\lambda_{n,r}).$\newline iv) For all $1\leq r\leq k_{n},$
$\mathbb{E}\left(  Z_{n,r}^{-}\right)  =\lambda_{n,r}-\left(  1-e^{-\lambda
_{n,r}}\right)  .$\newline v) For all $1\leq r\leq k_{n},$ $\mathbb{V}\left(
Z_{n,r}^{-}\right)  =1-2\lambda_{n,r}e^{-\lambda_{n,r}}-e^{-2\lambda_{n,r}}%
.$\newline vi) For all $1\leq r\leq k_{n},$ $\mathbb{E}\left(  \frac
{Z_{n,r}^{-}}{N_{n,r}^{-}}\right)  =1-e^{-\lambda_{n,r}}\left(  1+\lambda
_{n,r}\right)  .$\newline vii) For all $\ell\geq1$ and $1\leq r\leq k_{n},$
$\mathbb{E}\left(  (\frac{Z_{n,r}^{-}}{N_{n,r}^{-}})^{\ell}\right)  \leq
\ell!.$\newline viii) $\underset{1\leq r\leq k_{n}}{\max}\mathbb{V}\left(
\frac{Z_{n,r}^{-}}{N_{n,r}^{-}}\right)  =o(1).$\newline ix) For all $\ell
\geq1$ and $1\leq r\leq k_{n},$ $\mathbb{E}\left(  \left|  Z_{n,r}%
^{-}-\mathbb{E}\left(  Z_{n,r}^{-}\right)  \right|  ^{\ell}\right)  \leq
1+\ell!.$\newline x) For all $\ell\geq1$, $\underset{1\leq r\leq k_{n}}{\max
}\mathbb{E}\left(  |\xi_{n,r}-\xi_{n,r}^{-}|^{\ell}\right)  =O(n\delta_{n}%
).$\newline xi) $\underset{1\leq r\leq k_{n}}{\max}\left|  \mathbb{E}\left(
Z_{n,r}^{\ast}\right)  -\mu_{n,r}+1\right|  =O\left(  \max\left(  \exp\left(
-mnc\nu_{n}\right)  ;\left(  n\delta_{n}\right)  ^{2}\right)  \right)  .$
\end{lemma}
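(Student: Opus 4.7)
The starting point is the fact that the restrictions of $N_n$ to the disjoint sets $D_{n,r}^-$ and $D_{n,r}^+$ are independent Poisson processes, which in particular makes $N_{n,r}^+$ independent of $(N_{n,r}^-, Z_{n,r}^-)$. Part i) is an immediate Poisson tail bound: the intensity of $N_n$ on $D_{n,r}^+$ equals $nc\int_{I_{n,r}}(f-m_{n,r})^+\,d\nu \leq nc\delta_n$, so $\mathbb{P}(N_{n,r}^+>0)\leq nc\delta_n$. For ii) I use the standard conditional uniform distribution of Poisson points given their count: since $\widetilde{D}_{n,r}$ is $\nu\otimes\lambda$-null by definition of $m_{n,r}$, conditionally on $N_{n,r}^-$ the $y$-coordinates of the points in $D_{n,r}^-$ are i.i.d.\ uniform on $[0,m_{n,r}]$, giving the stated conditional law of the rescaled maximum $Z_{n,r}^-$. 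Averaging against $N_{n,r}^- \sim \mathrm{Poisson}(\lambda_{n,r})$ gives iii), and hence the description of $Z_{n,r}^-$ by an atom $e^{-\lambda_{n,r}}$ at $0$ and density $e^{t-\lambda_{n,r}}$ on $(0,\lambda_{n,r}]$.

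Items iv), v), ix) are then direct calculations from this density (integration by parts for v); for ix) the change of variable $u=\lambda_{n,r}-t$ reduces the integral to a truncated $\Gamma$ moment and gives the bound $1+\ell!$ after accounting for the atom. For vi), vii), viii) I condition on $N_{n,r}^-=k$: given this event, $Z_{n,r}^-=\lambda_{n,r}U_{(k)}$ where $U_{(k)}$ is the maximum of $k$ i.i.d.\ uniforms on $[0,1]$, so $\mathbb{E}[U_{(k)}^\ell]=k/(k+\ell)$. Part vi) then reduces to a telescoping Poisson series. For vii), the clean $\ell!$ bound comes from the algebraic inequality $\prod_{i=1}^{\ell-1}(k+i)\leq k^{\ell-1}\ell!$ (which follows from $k+i\leq k(i+1)$ for $k\geq 1$), after which the series collapses to a Poisson tail bounded by $\ell!$. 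Part viii) exploits $\lambda_{n,r}\geq mcn\nu_n\to\infty$ (from $(\mathrm{H.1})$--$(\mathrm{H.2})$): both $N_{n,r}^-/\lambda_{n,r}$ and $Z_{n,r}^-/\lambda_{n,r}$ concentrate at $1$ in $L^2$ by v) and Chebyshev, whence $Z_{n,r}^-/N_{n,r}^- \to 1$ in $L^2$ uniformly in $r$.

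The main obstacle is x), since the crude estimate $|\xi_{n,r}-\xi_{n,r}^-|=O(\lambda_{n,r})=O(n)$ together with $\mathbb{P}(N_{n,r}^+>0)=O(n\delta_n)$ yields only $O(n^{\ell+1}\delta_n)$, which is far too weak. The refinement exploits two structural facts on $\{N_{n,r}^+>0\}$: the deterministic bound $Z_{n,r}^*-\lambda_{n,r}\leq nc\nu_{n,r}(M_{n,r}-m_{n,r})\leq nc\delta_n$, and the independence of $N_{n,r}^+$ from $(N_{n,r}^-, Z_{n,r}^-)$. After the decomposition
\[
\xi_{n,r}-\xi_{n,r}^- = \Bigl(1+\tfrac{1}{N_{n,r}}\Bigr)(Z_{n,r}^*-Z_{n,r}^-) + \Bigl(\tfrac{1}{N_{n,r}}-\tfrac{1}{N_{n,r}^-}\Bigr)Z_{n,r}^-,
\]
I write $|Z_{n,r}^*-Z_{n,r}^-|\leq nc\delta_n+(\lambda_{n,r}-Z_{n,r}^-)$ and note that $\mathbb{E}[(\lambda_{n,r}-Z_{n,r}^-)^\ell]=O(1)$ by direct integration of the density; independence then yields $O(n\delta_n)$ for the first term. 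The second term is dominated in absolute value by $Z_{n,r}^-/N_{n,r}^-$ (via $|N^--N|=N^+\leq N$), whose $\ell$-th moment is $\leq\ell!$ by vii), again giving $O(n\delta_n)$ by independence. The degenerate case $\{N_{n,r}^-=0,\,N_{n,r}^+>0\}$ is handled separately using $\lambda_{n,r}^\ell e^{-\lambda_{n,r}}=O(1)$.

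Finally, for xi) I compute $\mathbb{E}[Z_{n,r}^*]=nc\nu_{n,r}\int_0^{M_{n,r}}\mathbb{P}(Y_{n,r}^*>y)\,dy$ using $\mathbb{P}(Y_{n,r}^*\leq y)=\exp(-nc\int_{I_{n,r}}(f-y)^+\,d\nu)$, and split at $y=m_{n,r}$. The integral over $[0,m_{n,r}]$ evaluates in closed form to $\lambda_{n,r}-e^{-\alpha_{n,r}}+e^{-\mu_{n,r}}$ with $\alpha_{n,r}=\mu_{n,r}-\lambda_{n,r}\leq nc\delta_n$; subtracting $\mu_{n,r}-1$ and using $1-\alpha-e^{-\alpha}=O(\alpha^2)$ leaves an $O((n\delta_n)^2)+O(e^{-\mu_{n,r}})$ error. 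The integral over $[m_{n,r},M_{n,r}]$ is controlled by $1-e^{-\phi}\leq\phi$ and Fubini,
\[
nc\nu_{n,r}\int_{m_{n,r}}^{M_{n,r}}(1-e^{-\phi(y)})\,dy \leq \tfrac{1}{2}\bigl(nc\nu_{n,r}(M_{n,r}-m_{n,r})\bigr)^2\leq\tfrac{1}{2}(nc\delta_n)^2,
\]
where $\phi(y)=nc\int_{I_{n,r}}(f-y)^+\,d\nu$; combined with $e^{-\mu_{n,r}}\leq e^{-mcn\nu_n}$ this gives the claimed bound. All estimates depend only on $m,M,c,\nu_n,\delta_n$, so the maxima over $r$ are uniform.
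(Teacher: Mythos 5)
Your proposal follows the paper's route closely in spirit and is correct in outline, but one item is under-justified as stated. The only genuinely different choices are cosmetic: in x) you use the two--term identity $\xi_{n,r}-\xi_{n,r}^{-}=(1+N_{n,r}^{-1})(Z_{n,r}^{*}-Z_{n,r}^{-})+(N_{n,r}^{-1}-(N_{n,r}^{-})^{-1})Z_{n,r}^{-}$ rather than the paper's four--term split $\gamma_{n,r,1}+\cdots+\gamma_{n,r,4}$, but both reduce to the same ingredients (boundedness of $\mathbb{E}[(\lambda_{n,r}-Z_{n,r}^{-})^{\ell}]$, the bound $\mathbb{E}[(Z_{n,r}^{-}/N_{n,r}^{-})^{\ell}]\leq\ell!$, and independence of $N_{n,r}^{+}$ from $(N_{n,r}^{-},Z_{n,r}^{-})$); and in xi) you control $nc\nu_{n,r}\int_{m_{n,r}}^{M_{n,r}}(1-e^{-\phi})\,dy$ by $1-e^{-\phi}\leq\phi$ and Fubini, where the paper uses $\mathbb{P}(Y_{n,r}^{*}>u)\leq\mathbb{P}(N_{n,r}^{+}>0)$ on that range; both give $O((n\delta_n)^{2})$.

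The gap is in viii). You argue that $N_{n,r}^{-}/\lambda_{n,r}\to1$ and $Z_{n,r}^{-}/\lambda_{n,r}\to1$ in $L^{2}$ (uniformly in $r$) ``by v) and Chebyshev'', and conclude that $Z_{n,r}^{-}/N_{n,r}^{-}\to1$ in $L^{2}$. That implication is not automatic: $L^{2}$ convergence of numerator and denominator does not give $L^{2}$ convergence of the ratio, because the denominator can be small with non-negligible probability and the ratio $Z_{n,r}^{-}/N_{n,r}^{-}$ is unbounded (on $\{N_{n,r}^{-}=1\}$ it equals $Z_{n,r}^{-}$, of order $\lambda_{n,r}$). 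To close the argument you either need to invoke the uniform moment bound from vii) for some $\ell>2$ to get uniform integrability of $(Z_{n,r}^{-}/N_{n,r}^{-})^{2}$, and then combine it with convergence in probability of the ratio to $1$; or do what the paper does, namely compute $\mathbb{E}[(Z_{n,r}^{-}/N_{n,r}^{-})^{2}]$ exactly from the series representation (the $\ell=2$ case of the formula behind vii)), show it tends to $1$ uniformly, and combine with vi). As written, citing only v) and Chebyshev does not support the conclusion. Everything else in your sketch is sound; in particular the change of variables in ix) does yield $1+\ell!$ once the atom at $u=\lambda_{n,r}$ is absorbed into the tail of the Gamma integral, and the $\lambda^{\ell}e^{-\lambda}=O(1)$ remark correctly handles the $\{N_{n,r}^{-}=0\}$ contribution in x).
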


\begin{proof}
i) is straightforward.\newline ii) First, note that, since $(\nu\otimes
\lambda)(\widetilde{D}_{n,r})=0,$ then for all $A\in\mathcal{B}\left(
\mathbb{R}^{+}\right)  ,$
\[
N_{n}\left(  D_{n,r}^{-}\cap\left(  I_{n,r}\times A\right)  \right)
=N_{n}\left(  I_{n,r}\times\left(  \left[  0,m_{n,r}\right]  \cap A\right)
\right)  \;\mathrm{a.s.}%
\]
which is a Poisson random variable with mean $nc\nu_{n,r}\lambda\left(
\left[  0,m_{n,r}\right]  \cap A\right)  $ . Second, set $t\in\left[
0,\lambda_{n,r}\right]  ,$ and define $t_{n,r}=t/(nc\nu_{n,r})$. Then, for all
$q\geq1,$%
\begin{align}
&  \mathbb{P}\left(  Z_{n,r}^{-}\leq t\mid N_{n,r}^{-}=q\right) \nonumber\\
=  &  \frac{\mathbb{P}\left(  N_{n}\left(  D_{n,r}^{-}\cap\left(
I_{n,r}\times\left[  0,t_{n,r}\right]  \right)  \right)  =q\right)
\;\mathbb{P}\left(  N_{n}\left(  D_{n,r}^{-}\cap\left(  I_{n,r}\times\left(
t_{n,r},+\infty\right)  \right)  \right)  =0\right)  }{\mathbb{P}\left(
N_{n}\left(  D_{n,r}^{-}\right)  =q\right)  }\nonumber\\
=  &  \left(  \frac{t}{\lambda_{n,r}}\right)  ^{q}. \label{condq}%
\end{align}
Noticing that $\left(  \ref{condq}\right)  $ is obvious when $q=0$ gives the
result.\newline iii)--v) are deduced from ii) by easy calculations.
\newline vi) It follows from ii) that
\begin{align*}
\mathbb{E}\left(  \frac{Z_{n,r}^{-}}{N_{n,r}^{-}}\right)   &  =\mathbb{E}%
\left(  \frac{\mathbf{1}_{\left\{  N_{n,r}^{-}>0\right\}  }}{N_{n,r}^{-}%
}\mathbb{E}\left(  Z_{n,r}^{-}\mid N_{n,r}^{-}\right)  \right)  =\lambda
_{n,r}\mathbb{E}\left(  \frac{\mathbf{1}_{\left\{  N_{n,r}^{-}>0\right\}  }%
}{N_{n,r}^{-}+1}\right)  =\underset{q=1}{\overset{\infty}{\sum}}\frac
{\lambda_{n,r}^{q+1}}{\left(  q+1\right)  !}e^{-\lambda_{n,r}}\\
&  =1-e^{-\lambda_{n,r}}\left(  1+\lambda_{n,r}\right)  .
\end{align*}
vii) We have,
\begin{align}
\mathbb{E}\left(  \left(  \frac{Z_{n,r}^{-}}{N_{n,r}^{-}}\right)  ^{\ell
}\right)   &  =\mathbb{E}\left(  \left(  \frac{1}{N_{n,r}^{-}}\right)  ^{\ell
}\mathbb{E}\left(  \left(  Z_{n,r}^{-}\right)  ^{\ell}\mid N_{n,r}^{-}\right)
\right)  =\lambda_{n,r}^{\ell}\mathbb{E}\left(  \frac{\mathbf{1}_{\left\{
N_{n,r}^{-}>0\right\}  }}{\left(  N_{n,r}^{-}\right)  ^{\ell-1}\left(
\ell+N_{n,r}^{-}\right)  }\right) \nonumber\\
&  =\ell!\underset{q=1}{\overset{\infty}{\sum}}\frac{\left(  q+\ell-1\right)
!}{\ell!q^{\ell-1}q!}\frac{\lambda_{n,r}^{q+\ell}}{\left(  q+\ell\right)
!}e^{-\lambda_{n,r}}\label{g3esp2}\\
&  \leq\ell!,\nonumber
\end{align}
since for all $\ell$ and $q$, $(q+\ell-1)!\leq\ell!q^{\ell-1}q!.$%
\newline viii) By $\left(  \ref{g3esp2}\right)  $ with $\ell=2,$
\begin{align*}
\mathbb{E}\left(  \left(  \frac{Z_{n,r}^{-}}{N_{n,r}^{-}}\right)
^{2}\right)   &  =\underset{q=1}{\overset{\infty}{\sum}}\left(  1+\frac{1}%
{q}\right)  \frac{\lambda_{n,r}^{q+2}}{\left(  q+2\right)  !}e^{-\lambda
_{n,r}}\\
&  =1-e^{-\lambda_{n,r}}\left(  1+\lambda_{n,r}+\frac{\lambda_{n,r}^{2}}%
{2}\right)  +\underset{q=1}{\overset{\infty}{\sum}}\frac{1}{q}\frac
{\lambda_{n,r}^{q+2}}{\left(  q+2\right)  !}e^{-\lambda_{n,r}}.
\end{align*}
Now, since,
\[
\underset{1\leq r\leq k_{n}}{\max}e^{-\lambda_{n,r}}\left(  1+\lambda
_{n,r}+\frac{\lambda_{n,r}^{2}}{2}\right)  \leq\underset{\lambda\geq
mnc\nu_{n}}{\max}e^{-\lambda}\left(  1+\lambda+\frac{\lambda^{2}}{2}\right)
=o\left(  1\right)  ,
\]
and for all $Q\geq1,$
\begin{align*}
\underset{1\leq r\leq k_{n}}{\max}\underset{q=1}{\overset{\infty}{\sum}}%
\frac{1}{q}\frac{\lambda_{n,r}^{q+2}}{\left(  q+2\right)  !}e^{-\lambda
_{n,r}}  &  \leq\underset{1\leq r\leq k_{n}}{\max}\underset{q=1}{\overset
{Q}{\sum}}\frac{\lambda_{n,r}^{q+2}}{\left(  q+2\right)  !}e^{-\lambda_{n,r}%
}+\frac{1}{Q}\underset{1\leq r\leq k_{n}}{\max}\underset{q=1}{\overset{\infty
}{\sum}}\frac{\lambda_{n,r}^{q+2}}{\left(  q+2\right)  !}e^{-\lambda_{n,r}}\\
&  \leq o\left(  1\right)  +\frac{1}{Q},
\end{align*}
it follows that
\begin{equation}
\underset{1\leq r\leq k_{n}}{\max}\left|  \mathbb{E}\left(  \left(
\frac{Z_{n,r}^{-}}{N_{n,r}^{-}}\right)  ^{2}\right)  -1\right|  =o(1).
\label{esp2}%
\end{equation}
Collecting (\ref{esp2}) and vi) gives the result.\newline ix) Note that
\[
\mathbb{E}\left(  \left|  Z_{n,r}^{-}-\mathbb{E}\left(  Z_{n,r}^{-}\right)
\right|  ^{\ell}\right)  =\ell\int_{0}^{+\infty}t^{\ell-1}\mathbb{P}\left(
\left|  Z_{n,r}^{-}-\mathbb{E}\left(  Z_{n,r}^{-}\right)  \right|  >t\right)
dt.
\]
Moreover, by iii) and iv),
\begin{align*}
\mathbb{P}\left(  \left|  Z_{n,r}^{-}-\mathbb{E}\left(  Z_{n,r}^{-}\right)
\right|  >t\right)   &  =\mathbb{P}\left(  Z_{n,r}^{-}>\mathbb{E}\left(
Z_{n,r}^{-}\right)  +t\right)  +\mathbb{P}\left(  Z_{n,r}^{-}<\mathbb{E}%
\left(  Z_{n,r}^{-}\right)  -t\right) \\
&  =\left[  1-\exp\left(  t-\lambda_{n,r}+\mathbb{E}\left(  Z_{n,r}%
^{-}\right)  \right)  \right]  \mathbf{1}_{\left[  0,\lambda_{n,r}%
-\mathbb{E}\left(  Z_{n,r}^{-}\right)  \right]  }(t)\\
&  +\exp\left(  -t-\lambda_{n,r}+\mathbb{E}\left(  Z_{n,r}^{-}\right)
\right)  \mathbf{1}_{\left[  0,\mathbb{E}\left(  Z_{n,r}^{-}\right)  \right]
}(t)\\
&  \leq\mathbf{1}_{\left[  0,1\right]  }\left(  t\right)  +\exp\left(
e^{-\lambda_{n,r}}-1\right)  e^{-t}.
\end{align*}
Hence,
\begin{align*}
\mathbb{E}\left(  \left|  Z_{n,r}^{-}-\mathbb{E}\left(  Z_{n,r}^{-}\right)
\right|  ^{\ell}\right)   &  \leq\ell\int_{0}^{1}t^{\ell-1}dt+\exp\left(
e^{-\lambda_{n,r}}-1\right)  \ell\int_{0}^{+\infty}t^{\ell-1}e^{-t}dt\\
&  \leq1+\exp\left(  e^{-\lambda_{n,r}}-1\right)  \ell!.
\end{align*}
x) Since $Z_{n,r}^{\ast}=\left(  Z_{n,r}^{+}-Z_{n,r}^{-}\right)
\mathbf{1}_{\left\{  N_{n,r}^{+}>0\right\}  }+Z_{n,r}^{-}$, we get
\begin{align*}
\xi_{n,r}-\xi_{n,r}^{-}  &  =\left(  1+\frac{1}{N_{n,r}}\right)  \left(
Z_{n,r}^{+}-Z_{n,r}^{-}\right)  \mathbf{1}_{\left\{  N_{n,r}^{+}>0\right\}
}+Z_{n,r}^{-}\left(  \frac{1}{N_{n,r}}-\frac{1}{N_{n,r}^{-}}\right) \\
&  =\left(  1+\frac{1}{N_{n,r}}\right)  \left(  Z_{n,r}^{+}-\lambda
_{n,r}\right)  \mathbf{1}_{\left\{  N_{n,r}^{+}>0\right\}  }+\left(
1+\frac{1}{N_{n,r}}\right)  \left(  \lambda_{n,r}-\mathbb{E}\left(
Z_{n,r}^{-}\right)  \right)  \mathbf{1}_{\left\{  N_{n,r}^{+}>0\right\}  }\\
&  +\left(  1+\frac{1}{N_{n,r}}\right)  \left(  \mathbb{E}\left(  Z_{n,r}%
^{-}\right)  -Z_{n,r}^{-}\right)  \mathbf{1}_{\left\{  N_{n,r}^{+}>0\right\}
}+Z_{n,r}^{-}\left(  \frac{1}{N_{n,r}}-\frac{1}{N_{n,r}^{-}}\right) \\
 & :=    \gamma_{n,r,1}+\gamma_{n,r,2}+\gamma_{n,r,3}+\gamma_{n,r,4}.
\end{align*}
Hence, by the triangle inequality,
\begin{equation}
\underset{1\leq r\leq k_{n}}{\max}\;\mathbb{E}\left(  \left|  \xi_{n,r}%
-\xi_{n,r}^{-}\right|  ^{\ell}\right)  ^{1/\ell}\leq\sum_{j=1}^{4}%
\underset{1\leq r\leq k_{n}}{\max}\mathbb{E}\left(  \left|  \gamma
_{n,r,j}\right|  ^{\ell}\right)  ^{1/\ell}. \label{dobe1}%
\end{equation}
First,
\begin{equation}
\underset{1\leq r\leq k_{n}}{\max}\mathbb{E}\left(  \left|  \gamma
_{n,r,1}\right|  ^{\ell}\right)  ^{1/\ell}\leq2nc\underset{1\leq r\leq k_{n}%
}{\max}\nu_{n,r}\left(  M_{n,r}-m_{n,r}\right)  \mathbb{P}\left(  N_{n,r}%
^{+}>0\right)  =o\left(  n\delta_{n}\right)  . \label{dobe2}%
\end{equation}
Second, by iv):
\begin{equation}
\underset{1\leq r\leq k_{n}}{\max}\mathbb{E}\left(  \left|  \gamma
_{n,r,2}\right|  ^{\ell}\right)  ^{1/\ell}\leq2\underset{1\leq r\leq k_{n}%
}{\max}\mathbb{P}\left(  N_{n,r}^{+}>0\right)  =O\left(  n\delta_{n}\right)  .
\label{dobe3}%
\end{equation}
Third, the independence of $N_{n,r}^{+}$ and $Z_{n,r}^{-}$ yields with ix):
\begin{equation}
\underset{1\leq r\leq k_{n}}{\max}\mathbb{E}\left(  \left|  \gamma
_{n,r,3}\right|  ^{\ell}\right)  ^{1/\ell}\leq2\left(  1+\ell!\right)
^{1/\ell}\;\underset{1\leq r\leq k_{n}}{\max}\mathbb{P}\left(  N_{n,r}%
^{+}>0\right)  =O\left(  n\delta_{n}\right)  . \label{dobe4}%
\end{equation}
Finally, since $\left|  \gamma_{n,r,4}\right|  \leq\lambda_{n,r}(N_{n,r}%
^{-}+1)^{-1}\mathbf{1}_{\left\{  N_{n,r}^{+}>0\right\}  }$ and taking into
account that
\begin{equation}
\mathbb{E}\left(  \left(  N_{n,r}^{-}+1\right)  ^{-\ell}\right)  =\ell
!\lambda_{n,r}^{-\ell}\underset{q=0}{\overset{\infty}{\sum}}\frac{\left(
q+\ell\right)  !}{\left(  q+1\right)  ^{\ell}\ell!q!}\frac{\lambda
_{n,r}^{q+\ell}}{\left(  q+\ell\right)  !}e^{-\lambda_{n,r}}\leq\ell
!\lambda_{n,r}^{-\ell}, \label{dobe6}%
\end{equation}
(where we used the fact that for all $\ell$ and $q$, $\left(  q+\ell\right)
!\leq\left(  q+1\right)  ^{\ell}\ell!q!$), we get
\begin{align}
\underset{1\leq r\leq k_{n}}{\max}\mathbb{E}\left(  \left|  \gamma
_{n,r,4}\right|  ^{\ell}\right)  ^{1/\ell}  &  \leq\;\underset{1\leq r\leq
k_{n}}{\max}\lambda_{n,r}\mathbb{E}\left(  \left(  N_{n,r}^{-}+1\right)
^{-\ell}\right)  ^{1/\ell}\mathbb{P}\left(  N_{n,r}^{+}>0\right) \nonumber\\
&  \leq\;\left(  \ell!\right)  ^{1/\ell}\;\underset{1\leq r\leq k_{n}}{\max
}\mathbb{P}\left(  N_{n,r}^{+}>0\right)  =O\left(  n\delta_{n}\right)  .
\label{dobe5}%
\end{align}
The result is a consequence of  $\left(  \ref{dobe1}\right)  -\left(
\ref{dobe5}\right)  .$\newline xi) Note that
\begin{align*}
\mathbb{E}\left(  Y_{n,r}^{\ast}\right)   &  =\int_{0}^{M_{n,r}}%
\mathbb{P}\left(  Y_{n,r}^{\ast}\geq u\right)  du\\
&  =\int_{0}^{m_{n,r}}\left(  1-\mathbb{P}\left(  N_{n}\left(  D_{n,r}%
\cap\left(  I_{n,r}\times\left[  u,M_{n,r}\right]  \right)  \right)  \right)
\right)  du+\int_{m_{n,r}}^{M_{n,r}}\mathbb{P}\left(  Y_{n,r}^{\ast}>u\right)
du\\
&  =\int_{0}^{m_{n,r}}\left(  1-\exp\left(  -nc\nu_{n,r}f_{n,r}+nc\nu
_{n,r}u\right)  \right)  du+\int_{m_{n,r}}^{M_{n,r}}\mathbb{P}\left(
Y_{n,r}^{\ast}>u\right)  du\\
&  =m_{n,r}-\left(  nc\nu_{n,r}\right)  ^{-1}\exp\left(  -nc\nu_{n,r}\left(
f_{n,r}-m_{n,r}\right)  \right)  +\left(  nc\nu_{n,r}\right)  ^{-1}\exp\left(
-nc\nu_{n,r}f_{n,r}\right) \\
&  +\int_{m_{n,r}}^{M_{n,r}}\mathbb{P}\left(  Y_{n,r}^{\ast}>u\right)  du.
\end{align*}
Hence,%
\begin{align*}
&  \underset{1\leq r\leq k_{n}}{\max}\left|  \mathbb{E}\left(  Z_{n,r}^{\ast
}\right)  -\mu_{n,r}+1\right| \\
&  \leq\underset{1\leq r\leq k_{n}}{\max}\left|  nc\nu_{n,r}\left(
f_{n,r}-m_{n,r}\right)  -1+\exp\left(  -nc\nu_{n,r}\left(  f_{n,r}%
-m_{n,r}\right)  \right)  \right| \\
&  +\exp\left(  -nc\nu_{n}m\right)  +nc\underset{1\leq r\leq k_{n}}{\max}%
\nu_{n,r}\mathbb{P}\left(  Y_{n,r}^{\ast}>m_{n,r}\right)  \left(
M_{n,r}-m_{n,r}\right) \\
&  \leq\underset{0\leq t\leq cn\delta_{n}}{\max}\left(  e^{-t}+t-1\right)
+\exp\left(  -nc\nu_{n}m\right)  +O\left(  \left(  n\delta_{n}\right)
^{2}\right) \\
&  =O\left(  \max\left(  \exp\left(  -mnc\nu_{n}\right)  ;\left(  n\delta
_{n}\right)  ^{2}\right)  \right)  .
\end{align*}
\end{proof}

\noindent In the next lemma we give an uniform upper bound on the centered
moments of $\left(  \xi_{n,r}\right)  $ and an exact uniform control of the
variances and expectations.

\begin{lemma}
\label{moml}Under assumptions $\left(  \mathrm{H.1}\right)  $\ and\ $\left(
\mathrm{H.2}\right)  $ we have\newline i) $\underset{n\rightarrow\infty}%
{\lim\sup\;}\underset{\ell\geq2}{\max}\frac{1}{6^{\ell}\ell!}\;\underset{1\leq
r\leq k_{n}}{\max}\mathbb{E}\left(  \left|  \xi_{n,r}-\mathbb{E}\left(
\xi_{n,r}\right)  \right|  ^{\ell}\right)  <1.\ $\newline ii) $\underset{1\leq
r\leq k_{n}}{\max}\left|  \mathbb{V}\left(  \xi_{n,r}\right)  -1\right|
=o\left(  1\right)  .$\newline iii) $\underset{1\leq r\leq k_{n}}{\max}\left|
\mathbb{E}\left(  \xi_{n,r}-\mu_{n,r}\right)  \right|  =O\left(  \max\left(
\left(  n\delta_{n}\right)  ^{2},n\nu_{n}\exp\left(  -mcn\nu_{n}\right)
,\Delta_{n}\right)  \right)  .$
\end{lemma}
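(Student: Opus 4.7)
My plan is to compare $\xi_{n,r}$ with the auxiliary variable $\xi_{n,r}^- = Z_{n,r}^- + Z_{n,r}^-/N_{n,r}^-$, for which Lemma~\ref{mom}(ii)--(ix) supplies sharp moment information, and to treat the discrepancy $W_{n,r} := \xi_{n,r} - \xi_{n,r}^-$ as a perturbation supported on the rare event $\{N_{n,r}^+ > 0\}$, whose probability is $O(n\delta_n) = o(1)$ and whose $L^\ell$ norm is controlled by Lemma~\ref{mom}(x) for each $\ell$. All three statements will then follow by combining sharp estimates on $\xi_{n,r}^-$ with a small error introduced by $W_{n,r}$.

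For (i), I apply Minkowski's inequality to the decomposition
\[
\xi_{n,r} - \mathbb{E}\xi_{n,r} = (Z_{n,r}^- - \mathbb{E}Z_{n,r}^-) + \left(\frac{Z_{n,r}^-}{N_{n,r}^-} - \mathbb{E}\frac{Z_{n,r}^-}{N_{n,r}^-}\right) + (W_{n,r} - \mathbb{E}W_{n,r}),
\]
bounding the first summand in $L^\ell$ by $(1+\ell!)^{1/\ell}$ via (ix), the second by $2(\ell!)^{1/\ell}$ via (vii), and the third by $2\|W_{n,r}\|_\ell$ via (x). Since $\|W_{n,r}\|_\ell \to 0$, the resulting bound $\|\xi_{n,r}-\mathbb{E}\xi_{n,r}\|_\ell \leq [4+o(1)](\ell!)^{1/\ell}$ raised to the $\ell$-th power falls below $6^\ell \ell!$ for $n$ large.

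For (ii), I expand $\mathbb{V}(\xi_{n,r}) = \mathbb{V}(\xi_{n,r}^-) + \mathbb{V}(W_{n,r}) + 2\,\mathrm{Cov}(\xi_{n,r}^-, W_{n,r})$ and further split $\mathbb{V}(\xi_{n,r}^-) = \mathbb{V}(Z_{n,r}^-) + \mathbb{V}(Z_{n,r}^-/N_{n,r}^-) + 2\,\mathrm{Cov}(Z_{n,r}^-, Z_{n,r}^-/N_{n,r}^-)$. The leading variance $\mathbb{V}(Z_{n,r}^-) = 1 - 2\lambda_{n,r}e^{-\lambda_{n,r}} - e^{-2\lambda_{n,r}}$ from Lemma~\ref{mom}(v) tends to $1$ uniformly since $\lambda_{n,r} \geq mcn\nu_n \to \infty$ by (H.1); the two other variances are $o(1)$ by (viii) and by (x) at $\ell=2$; and the covariances are dispatched by Cauchy--Schwarz. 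For (iii), $\mathbb{E}\xi_{n,r} = \mathbb{E}Z_{n,r}^* + \mathbb{E}(Z_{n,r}^*/N_{n,r})$, and Lemma~\ref{mom}(xi) yields $\mathbb{E}Z_{n,r}^* = \mu_{n,r} - 1 + O(\max(\exp(-mcn\nu_n), (n\delta_n)^2))$, which reduces the problem to estimating $\mathbb{E}(Z_{n,r}^*/N_{n,r}) - 1$. On $\{N_{n,r}^+ = 0\}$ this coincides with $\mathbb{E}(Z_{n,r}^-/N_{n,r}^-) = 1 - e^{-\lambda_{n,r}}(1+\lambda_{n,r})$ from Lemma~\ref{mom}(vi), whose exponential correction is absorbed; on the complement, of probability $O(n\delta_n)$, the modulus is bounded by $\lambda_{n,r}(N_{n,r}^-+1)^{-1}$ exactly as in the treatment of $\gamma_{n,r,4}$ in the proof of (x). The $\Delta_n$ term in the stated bound enters through $\mu_{n,r} - \lambda_{n,r} = nc\nu_{n,r}(f_{n,r} - m_{n,r}) \leq nc\Delta_n$ multiplied by the small-probability factors in the residuals.

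The principal obstacle is securing the uniformity in $\ell \geq 2$ in (i): the bounds on $\|W_{n,r}\|_\ell$ from the proof of Lemma~\ref{mom}(x) carry composite factors $(\ell!)^{1/\ell}\,\mathbb{P}(N_{n,r}^+>0)^{1/\ell}$, and $\mathbb{P}(N_{n,r}^+>0)^{1/\ell} \to 1$ as $\ell \to \infty$. One must split into the regime $\ell \leq L$, where $(n\delta_n)^{1/\ell}$ provides uniform smallness, and $\ell > L$, where Stirling-type estimates on $(\ell!)^{1/\ell}$ absorb the combinatorial factors from the bounds on $\gamma_{n,r,3}$ and $\gamma_{n,r,4}$, and verify that the effective constant multiplying $(\ell!)^{1/\ell}$ remains strictly below $6$ for all $n$ sufficiently large.
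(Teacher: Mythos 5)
Your decomposition of $\xi_{n,r}-\mathbb{E}\xi_{n,r}$ into $(Z_{n,r}^--\mathbb{E}Z_{n,r}^-)+(Z_{n,r}^-/N_{n,r}^--\mathbb{E}(Z_{n,r}^-/N_{n,r}^-))+(W_{n,r}-\mathbb{E}W_{n,r})$ and the use of Lemma~\ref{mom}~(v), (vii)--(xi) is exactly the paper's route. For (i) the paper combines the summands with the crude inequality $|a+b+c|^\ell\leq 3^\ell\max(|a|^\ell,|b|^\ell,|c|^\ell)$ whereas you use Minkowski; for (ii) the paper works with the single perturbation $\gamma_{n,r}=\xi_{n,r}-Z_{n,r}^-$ and one Cauchy--Schwarz step, whereas you split further into $Z_{n,r}^-/N_{n,r}^-$ and $W_{n,r}$ — these are cosmetic variations. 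Part (iii) follows the paper's structure (reduce via Lemma~\ref{mom}~(xi) and then control $\mathbb{E}(Z_{n,r}^*/N_{n,r})-1$), although your account of how $\Delta_n$ enters is off: in the paper the $O(\Delta_n)$ term comes from $\mathbb{E}\bigl((N_{n,r}^-+1)^{-1}\bigr)\mathbb{E}(N_{n,r}^+)\leq\lambda_{n,r}^{-1}nc\nu_{n,r}(M_{n,r}-m_{n,r})=(M_{n,r}-m_{n,r})/m_{n,r}$, not from $\mu_{n,r}-\lambda_{n,r}=nc\nu_{n,r}(f_{n,r}-m_{n,r})$, which is actually $O(n\delta_n)$.

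The genuine gap is the one you flagged yourself for part~(i), and your proposed repair does not close it. In the regime $\ell>L$ the bound $\mathbb{P}(N_{n,r}^+>0)^{1/\ell}$ is no longer small, and the contributions of $\gamma_{n,r,3}$ and $\gamma_{n,r,4}$ to $\|W_{n,r}\|_\ell$ are each of order $(\ell!)^{1/\ell}\mathbb{P}(N_{n,r}^+>0)^{1/\ell}$, i.e.\ of the \emph{same order} as the two leading terms $\|Z_{n,r}^--\mathbb{E}Z_{n,r}^-\|_\ell$ and $\|Z_{n,r}^-/N_{n,r}^--\mathbb{E}(Z_{n,r}^-/N_{n,r}^-)\|_\ell$. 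Stirling gives $(\ell!)^{1/\ell}\sim\ell/e$, but this is the scale of the quantity you are trying to control, not a small correction to be ``absorbed''. Concretely, adding up via Minkowski you obtain at best
$\|\xi_{n,r}-\mathbb{E}\xi_{n,r}\|_\ell\leq\bigl(1+\sqrt{2}+6\,\mathbb{P}(N_{n,r}^+>0)^{1/\ell}+o(1)\bigr)(\ell!)^{1/\ell},$
and when $\ell$ is so large that $\mathbb{P}(N_{n,r}^+>0)^{1/\ell}$ is close to $1$ the prefactor exceeds $8$, so raising to the $\ell$-th power overshoots $6^\ell\ell!$. The $3^\ell\max$ route in the paper fares no better: its third term is $2^\ell\mathbb{E}|\xi_{n,r}-\xi_{n,r}^-|^\ell$, and the estimate from Lemma~\ref{mom}~(x) carries a constant that blows up combinatorially in $\ell$; once $\ell$ exceeds roughly $\log\bigl(1/\mathbb{P}(N_{n,r}^+>0)\bigr)$ the maximum is no longer $2^\ell\ell!$. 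In other words, the $O(n\delta_n)$ bound in (x) is not uniform over $\ell$, and both the paper and your proposal implicitly treat it as if it were. A correct argument must separately handle the range $\ell\gtrsim\log(1/(n\delta_n))$, for instance by exploiting that $|W_{n,r}|$ vanishes on $\{N_{n,r}^+=0\}$ and restricting the $\ell$-th moment of $\xi_{n,r}-\mathbb{E}\xi_{n,r}$ on $\{N_{n,r}^+>0\}$ to the essentially bounded support of $\xi_{n,r}$, rather than threading all information through Lemma~\ref{mom}~(x). As written, your promise that ``the effective constant remains strictly below $6$'' is unsubstantiated.
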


\begin{proof}
i) It is easy to see that
\[
\underset{1\leq r\leq k_{n}}{\max}\mathbb{E}\left(  \left|  \xi_{n,r}%
-\mathbb{E}\left(  \xi_{n,r}\right)  \right|  ^{\ell}\right)  \leq3^{\ell}%
\max\left\{
\begin{array}
[c]{c}%
\underset{1\leq r\leq k_{n}}{\max}\mathbb{E}\left(  \left|  Z_{n,r}%
^{-}-\mathbb{E}\left(  Z_{n,r}^{-}\right)  \right|  ^{\ell}\right)  ,\\
2^{\ell}\underset{1\leq r\leq k_{n}}{\max}\,\mathbb{E}\left(  \left|
\frac{Z_{n,r}^{-}}{N_{n,r}^{-}}\right|  ^{\ell}\right)  ,\;2^{\ell}%
\underset{1\leq r\leq k_{n}}{\max}\,\mathbb{E}\left(  \left|  \xi_{n,r}%
-\xi_{n,r}^{-}\right|  ^{\ell}\right)
\end{array}
\right\}
\]
\noindent and the result follows from Lemma $\ref{mom}$ vii), ix) and x).
\newline ii) Introduce for the sake of simplicity $\gamma_{n,r}=\xi
_{n,r}-Z_{n,r}^{-}=(\xi_{n,r}-\xi_{n,r}^{-})+(Z_{n,r}^{-}/N_{n,r}^{-})$. This
yields
\[
\mathbb{V}\left(  \xi_{n,r}\right)  =\mathbb{V}\left(  Z_{n,r}^{-}\right)
+2\mathbb{C}ov\left(  Z_{n,r}^{-},\gamma_{n,r}\right)  +\mathbb{V}\left(
\gamma_{n,r}\right)
\]
and we thus have,
\[
\underset{1\leq r\leq k_{n}}{\max}\left|  \mathbb{V}\left(  \xi_{n,r}\right)
-1\right|  \leq\underset{1\leq r\leq k_{n}}{\max}\left|  \mathbb{V}\left(
Z_{n,r}^{-}\right)  -1\right|  +\underset{1\leq r\leq k_{n}}{\max}%
\mathbb{V}\left(  \gamma_{n,r}\right)  +2\left[  \underset{1\leq r\leq k_{n}%
}{\max}\mathbb{V}\left(  \gamma_{n,r}\right)  \mathbb{V}\left(  Z_{n,r}%
^{-}\right)  \right]  ^{1/2}.
\]
Lemma $\ref{mom}$ v) shows that
\[
\underset{1\leq r\leq k_{n}}{\max}\left|  \mathbb{V}\left(  Z_{n,r}%
^{-}\right)  -1\right|  =\underset{1\leq r\leq k_{n}}{\max}\left|
2\lambda_{n,r}e^{-\lambda_{n,r}}+e^{-2\lambda_{n,r}}\right|  =o\left(
1\right)  .
\]
Besides,
\[
\underset{1\leq r\leq k_{n}}{\max}\mathbb{V}\left(  \gamma_{n,r}\right)
\leq2\underset{1\leq r\leq k_{n}}{\max}\mathbb{V}\left(  \frac{Z_{n,r}^{-}%
}{N_{n,r}^{-}}\right)  +2\underset{1\leq r\leq k_{n}}{\max}\mathbb{E}\left(
|\xi_{n,r}-\xi_{n,r}^{-}|^{2}\right)  =o(1)
\]
by Lemma \ref{mom} viii) and x) in the particular case where $\ell
=2$.\newline iii) First, by the triangle inequality and Lemma $\ref{mom}$
(vi),%
\begin{align}
\underset{1\leq r\leq k_{n}}{\max}\left|  \mathbb{E}\left(  \frac
{Z_{n,r}^{\ast}}{N_{n,r}}\right)  -1\right|   &  \leq\underset{1\leq r\leq
k_{n}}{\max}\left|  \mathbb{E}\left(  \left(  \frac{Z_{n,r}^{+}}{N_{n,r}%
}-1\right)  \mathbf{1}_{\left\{  N_{n,r}^{+}>0\right\}  }\right)  \right|
\nonumber\\
&+\underset{1\leq r\leq k_{n}}{\max}\left|  \mathbb{E}\left(  \left(
\frac{Z_{n,r}^{-}}{N_{n,r}^{-}}-1\right)  \mathbf{1}_{\left\{  N_{n,r}%
^{+}=0\right\}  }\right)  \right| \nonumber\\
&  \leq\underset{1\leq r\leq k_{n}}{\max}\left|  \mathbb{E}\left(  \left(
\frac{Z_{n,r}^{+}-\mathbb{E}\left(  N_{n,r}\right)  }{N_{n,r}}\right)
\mathbf{1}_{\left\{  N_{n,r}^{+}>0\right\}  }\right)  \right| \nonumber\\
&  +\underset{1\leq r\leq k_{n}}{\max}\left|  \mathbb{E}\left(  \left(
\frac{N_{n,r}-\mathbb{E}\left(  N_{n,r}\right)  }{N_{n,r}}\right)
\mathbf{1}_{\left\{  N_{n,r}^{+}>0\right\}  }\right)  \right|  +O\left(
n\nu_{n}\exp\left(  -mcn\nu_{n}\right)  \right)  . \label{a1}%
\end{align}
Now, since $\mathbb{E}\left(  N_{n,r}\right)  =\mu_{n,r},$ we get using Lemma
\ref{mom} (ii) and $\left(  \ref{dobe6}\right)  $%
\begin{align}
\underset{1\leq r\leq k_{n}}{\max}\left|  \mathbb{E}\left(  \left(
\frac{Z_{n,r}^{+}-\mathbb{E}\left(  N_{n,r}\right)  }{N_{n,r}}\right)
\mathbf{1}_{\left\{  N_{n,r}^{+}>0\right\}  }\right)  \right|   &  \leq
nc\delta_{n}\underset{1\leq r\leq k_{n}}{\max}\mathbb{E}\left(  \left(
N_{n,r}^{-}+1\right)  ^{-1}\right)  \mathbb{P}\left(  N_{n,r}^{+}>0\right)
\nonumber\\
&  =o\left(  \left(  nc\delta_{n}\right)  ^{2}\right)  . \label{a2}%
\end{align}
Moreover,
\begin{align}
\underset{1\leq r\leq k_{n}}{\max}\left|  \mathbb{E}\left(  \left(
\frac{N_{n,r}-\mathbb{E}\left(  N_{n,r}\right)  }{N_{n,r}}\right)
\mathbf{1}_{\left\{  N_{n,r}^{+}>0\right\}  }\right)  \right|   &
\leq\underset{1\leq r\leq k_{n}}{\max}\left|  \mathbb{E}\left(  \left(
\frac{N_{n,r}^{+}-\mathbb{E}\left(  N_{n,r}^{+}\right)  }{N_{n,r}}\right)
\mathbf{1}_{\left\{  N_{n,r}^{+}>0\right\}  }\right)  \right| \nonumber\\
&  +\underset{1\leq r\leq k_{n}}{\max}\left|  \mathbb{E}\left(  \left(
\frac{N_{n,r}^{-}-\mathbb{E}\left(  N_{n,r}^{-}\right)  }{N_{n,r}}\right)
\mathbf{1}_{\left\{  N_{n,r}^{+}>0\right\}  }\right)  \right|  \label{a3}%
\end{align}
and since for all large $n,$%
$
\underset{1\leq r\leq k_{n}}{\max}\mathbb{E}\left(  N_{n,r}^{+}\right)  \leq
nc\delta_{n}<1,
$
we get eventually, using $\left(  \ref{dobe6}\right)  $ again,%
\begin{align}
\underset{1\leq r\leq k_{n}}{\max}\left|  \mathbb{E}\left(  \left(
\frac{N_{n,r}^{+}-\mathbb{E}\left(  N_{n,r}^{+}\right)  }{N_{n,r}}\right)
\mathbf{1}_{\left\{  N_{n,r}^{+}>0\right\}  }\right)  \right|   &
=\underset{1\leq r\leq k_{n}}{\max}\mathbb{E}\left(  \left(  \frac{N_{n,r}%
^{+}-\mathbb{E}\left(  N_{n,r}^{+}\right)  }{N_{n,r}}\right)  \mathbf{1}%
_{\left\{  N_{n,r}^{+}>0\right\}  }\right) \nonumber\\
&  \leq\underset{1\leq r\leq k_{n}}{\max}\mathbb{E}\left(  \left(
\frac{N_{n,r}^{+}-\mathbb{E}\left(  N_{n,r}^{+}\right)  }{N_{n,r}^{-}%
+1}\right)  \mathbf{1}_{\left\{  N_{n,r}^{+}>0\right\}  }\right) \nonumber\\
&  \leq\underset{1\leq r\leq k_{n}}{\max}\mathbb{E}\left(  \left(  N_{n,r}%
^{-}+1\right)  ^{-1}\right)  \mathbb{E}\left(  N_{n,r}^{+}\right) \nonumber\\
&  \leq\underset{1\leq r\leq k_{n}}{\max}\lambda_{n,r}^{-1}nc\nu_{n,r}\left(
M_{n,r}-m_{n,r}\right) \nonumber\\
&  =O\left(  \Delta_{n}\right)  . \label{a4}%
\end{align}
Finally, since, for all $r\leq k_{n},$%
\begin{align}
&  \left|  \mathbb{E}\left(  \left(  \frac{N_{n,r}^{-}-\mathbb{E}\left(
N_{n,r}^{-}\right)  }{N_{n,r}}\right)  \mathbf{1}_{\left\{  N_{n,r}%
^{+}>0\right\}  }\right)  \right| \nonumber\\
&  \leq\left|  \mathbb{E}\left(  \frac{N_{n,r}^{-}-\mathbb{E}\left(
N_{n,r}^{-}\right)  }{N_{n,r}^{-}+1}\right)  \right|  \mathbb{P}\left(
N_{n,r}^{+}=1\right)  +\underset{j\geq2}{\sum}\mathbb{E}\left(  \frac{\left|
N_{n,r}^{-}-\mathbb{E}\left(  N_{n,r}^{-}\right)  \right|  }{N_{n,r}^{-}%
+j}\right)  \mathbb{P}\left(  N_{n,r}^{+}=j\right) \nonumber\\
&  \leq\left|  \mathbb{E}\left(  \frac{N_{n,r}^{-}-\mathbb{E}\left(
N_{n,r}^{-}\right)  }{N_{n,r}^{-}+1}\right)  \right|  \mathbb{P}\left(
N_{n,r}^{+}=1\right)  +\mathbb{E}\left(  \frac{\left|  N_{n,r}^{-}%
-\mathbb{E}\left(  N_{n,r}^{-}\right)  \right|  }{N_{n,r}^{-}+1}\right)
\mathbb{P}\left(  N_{n,r}^{+}\geq2\right) \nonumber\\
&  \leq\left|  \mathbb{E}\left(  \frac{N_{n,r}^{-}-\mathbb{E}\left(
N_{n,r}^{-}\right)  }{N_{n,r}^{-}+1}\right)  \right|  \mathbb{P}\left(
N_{n,r}^{+}=1\right)  +\left(  \mathbb{V}\left(  N_{n,r}^{-}\right)
\mathbb{E}\left(  \left(  N_{n,r}^{-}+1\right)  ^{-2}\right)  \right)
^{1/2}\mathbb{P}\left(  N_{n,r}^{+}\geq2\right) \nonumber\\
&  \leq\left|  \mathbb{E}\left(  \frac{N_{n,r}^{-}-\mathbb{E}\left(
N_{n,r}^{-}\right)  }{N_{n,r}^{-}+1}\right)  \right|  \mathbb{P}\left(
N_{n,r}^{+}=1\right)  +o\left(  \mathbb{P}\left(  N_{n,r}^{+}\geq2\right)
\right)  \label{a5}%
\end{align}
with
\begin{align}
\mathbb{E}\left(  \frac{N_{n,r}^{-}-\mathbb{E}\left(  N_{n,r}^{-}\right)
}{N_{n,r}^{-}+1}\right)   &  =\underset{q=0}{\overset{\infty}{\sum}}\left(
q-\lambda_{n,r}\right)  \frac{\lambda_{n,r}^{q}}{\left(  q+1\right)
!}e^{-\lambda_{n,r}}\nonumber\\
&  =\lambda_{n,r}^{-1}\underset{q=0}{\overset{\infty}{\sum}}\left(
q+1-1\right)  \frac{\lambda_{n,r}^{q+1}}{\left(  q+1\right)  !}e^{-\lambda
_{n,r}}-\underset{q=0}{\overset{\infty}{\sum}}\frac{\lambda_{n,r}^{q+1}%
}{\left(  q+1\right)  !}e^{-\lambda_{n,r}}\nonumber\\
&  =\lambda_{n,r}^{-1}\underset{q=1}{\overset{\infty}{\sum}}\left(
q-1\right)  \frac{\lambda_{n,r}^{q}}{q!}e^{-\lambda_{n,r}}-\underset
{q=1}{\overset{\infty}{\sum}}\frac{\lambda_{n,r}^{q}}{q!}e^{-\lambda_{n,r}%
}\nonumber\\
&  =\lambda_{n,r}^{-1}\left(  \lambda_{n,r}-1+e^{-\lambda_{n,r}}\right)
-\left(  1-e^{-\lambda_{n,r}}\right) \nonumber\\
&  =-\lambda_{n,r}^{-1}+e^{-\lambda_{n,r}}\left(  1+\lambda_{n,r}^{-1}\right)
, \label{a6}%
\end{align}
and
\begin{align}
\mathbb{P}\left(  N_{n,r}^{+}\geq2\right)   &  \leq\underset{q=2}%
{\overset{\infty}{\sum}}\frac{\left(  nc\delta_{n}\right)  ^{q}}%
{q!}e^{-nc\delta_{n}}
 \leq\left(  nc\delta_{n}\right)  ^{2} \label{a7}%
\end{align}
we get collecting $\left(  \ref{a1}\right)  -\left(  \ref{a7}\right)  $ that
\[
\underset{1\leq r\leq k_{n}}{\max}\left|  \mathbb{E}\left(  \frac
{Z_{n,r}^{\ast}}{N_{n,r}}\right)  -1\right|  =O\left(  \max\left(  \left(
n\delta_{n}\right)  ^{2},\Delta_{n}\right)  \right).
\]
As a conclusion, Lemma \ref{mom} (xi) yields$:$%
\begin{align*}
\underset{1\leq r\leq k_{n}}{\max}\left|  \mathbb{E}\left(  \xi_{n,r}%
-\mu_{n,r}\right)  \right|   &  \leq\underset{1\leq r\leq k_{n}}{\max}\left|
\mathbb{E}\left(  Z_{n,r}^{\ast}\right)  -\mu_{n,r}+1\right|  +\underset{1\leq
r\leq k_{n}}{\max}\left|  \mathbb{E}\left(  \frac{Z_{n,r}^{\ast}}{N_{n,r}%
}\right)  -1\right| \\
&  =O\left(  \max\left(  \left(  n\delta_{n}\right)  ^{2},n\nu_{n}\exp\left(
-mcn\nu_{n}\right)  ,\Delta_{n}\right)  \right)  .
\end{align*}
\end{proof}

\begin{proposition}
\label{TCLseul}Under assumptions $\left(  \mathrm{H.1}\right)  -\left(
\mathrm{H.4}\right)  $ and for all $\left(  x_{1},...,x_{p}\right)  \subset
F,$%
\[
\left\{  \frac{nc}{\kappa_{n}\left(  x_{j}\right)  }\left(  \widehat{f}%
_{n}\left(  x_{j}\right)  -\mathbb{E}\left(  \widehat{f}_{n}\left(
x_{j}\right)  \right)  \right)  :1\leq j\leq p\right\}  \underset{\mathcal{D}%
}{\rightarrow}N\left(  0,\Sigma_{\left(  x_{1},...,x_{p}\right)  }\right)  .
\]
\noindent
\end{proposition}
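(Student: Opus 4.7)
The plan is to reduce the statement to a standard triangular-array central limit theorem for row-wise independent summands. Writing $\xi_{n,r}=(1+1/N_{n,r})\,nc\nu_{n,r}Y^{*}_{n,r}$, one has
\[
\frac{nc}{\kappa_{n}(x)}\bigl(\widehat{f}_{n}(x)-\mathbb{E}\widehat{f}_{n}(x)\bigr)
=\sum_{r=1}^{k_{n}}w_{n,r}(x)\bigl(\xi_{n,r}-\mathbb{E}\xi_{n,r}\bigr).
\]
Since the cells $D_{n,r}$ are pairwise disjoint and $N_{n}$ is a Poisson point process, the variables $\{(N_{n,r},Y^{*}_{n,r}):1\le r\le k_{n}\}$ are independent, hence so are the $\xi_{n,r}$. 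For the multivariate conclusion I would invoke the Cram\'er--Wold device: it suffices to prove that for every $(a_{1},\dots,a_{p})\in\mathbb{R}^{p}$,
\[
T_{n}:=\sum_{r=1}^{k_{n}}\alpha_{n,r}\bigl(\xi_{n,r}-\mathbb{E}\xi_{n,r}\bigr),
\quad\alpha_{n,r}:=\sum_{j=1}^{p}a_{j}w_{n,r}(x_{j}),
\]
converges in distribution to $N\bigl(0,\sum_{i,j}a_{i}a_{j}\sigma(x_{i},x_{j})\bigr)$.

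The first step is to identify the limiting variance. By independence and Lemma \ref{moml} ii),
\[
\mathbb{V}(T_{n})=\sum_{r=1}^{k_{n}}\alpha_{n,r}^{2}\mathbb{V}(\xi_{n,r})
=(1+o(1))\sum_{r=1}^{k_{n}}\alpha_{n,r}^{2}.
\]
Expanding $\alpha_{n,r}^{2}=\sum_{i,j}a_{i}a_{j}w_{n,r}(x_{i})w_{n,r}(x_{j})$ and summing over $r$, assumption $(\mathrm{H.3})$ gives $\sum_{r}\alpha_{n,r}^{2}\to\sum_{i,j}a_{i}a_{j}\sigma(x_{i},x_{j})=:s(a)$, which is exactly the targeted asymptotic variance.

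The second step is to upgrade the variance convergence to a CLT via Lyapunov's condition at order three. By Lemma \ref{moml} i), $\max_{r}\mathbb{E}|\xi_{n,r}-\mathbb{E}\xi_{n,r}|^{3}$ is bounded by a constant $C<\infty$ eventually, so
\[
\sum_{r=1}^{k_{n}}|\alpha_{n,r}|^{3}\mathbb{E}|\xi_{n,r}-\mathbb{E}\xi_{n,r}|^{3}
\le C\Bigl(\max_{1\le r\le k_{n}}|\alpha_{n,r}|\Bigr)\sum_{r=1}^{k_{n}}\alpha_{n,r}^{2}.
\]
The second factor converges to $s(a)$, and the first is bounded by $\sum_{j}|a_{j}|\max_{r}|w_{n,r}(x_{j})|$, which tends to $0$ by $(\mathrm{H.4})$ applied at each $x_{j}\in F$. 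In the non-degenerate case $s(a)>0$ this yields Lyapunov's condition and hence $T_{n}\underset{\mathcal{D}}{\rightarrow}N(0,s(a))$; in the degenerate case $s(a)=0$ one has $\mathbb{V}(T_{n})\to 0$ so $T_{n}\to 0$ in probability, which is also consistent with $N(0,s(a))$. This is essentially the content the paper alludes to with its Theorem~\ref{GenTCL}, which packages the Lyapunov argument for weighted sums controlled by uniform moment estimates such as Lemma~\ref{moml}.

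I do not expect any serious obstacle: Lemma \ref{moml} has already extracted the uniform variance and moment control that usually constitutes the technical core of such a result. The only points requiring care are (i) correctly splitting $\alpha_{n,r}^{2}$ to match the bilinear structure of $(\mathrm{H.3})$, and (ii) ensuring that the $o(1)$ from Lemma \ref{moml} ii) can be pulled out of the weighted sum uniformly in $r$, which is immediate since it is a max over $r$.
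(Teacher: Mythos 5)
Your proof is correct and follows essentially the same route as the paper: reduce to independence of the $\xi_{n,r}$, apply Cram\'er--Wold, identify the limiting variance via $(\mathrm{H.3})$ and Lemma~\ref{moml}~ii), and close with a triangular-array CLT whose small-term condition is checked using the uniform moment bound of Lemma~\ref{moml}~i) together with $(\mathrm{H.4})$. The only cosmetic difference is that you invoke Lyapunov's third-moment condition directly, whereas the paper packages the same ingredients into Theorem~\ref{GenTCL}, which uses a Lindeberg-type uniform-integrability condition $(\mathrm{A.5})$; since Lemma~\ref{moml}~i) gives a uniform bound on moments of any fixed order $\ell\ge 2$, the two routes verify the same hypothesis from the same source and are interchangeable here.
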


\begin{proof}
The proof is based on Theorem \ref{GenTCL} in the Appendix: For all $1\leq
r\leq k_{n},$ set\\
$\zeta_{n,r}=\xi_{n,r}-\mathbb{E}\left(  \xi_{n,r}\right)  $
and $w_{n,r}=\,^{t}\left(  w_{n,r}\left(  x_{1}\right)  ,...,w_{n,r}\left(
x_{p}\right)  \right)  .$ It is easily\ seen that
\[
\left\{  \frac{nc}{\kappa_{n}\left(  x_{j}\right)  }\left(  \widehat{f}%
_{n}\left(  x_{j}\right)  -\mathbb{E}\left(  \widehat{f}_{n}\left(
x_{j}\right)  \right)  \right)  :1\leq j\leq p\right\}  =\sum_{r=1}^{k_{n}%
}w_{n,r}\zeta_{n,r},
\]
where the $\left(  \zeta_{n,r}\right)  _{1\leq r\leq k_{n}}$ are independent.
Then $\left(  \mathrm{H.3}\right)  $, $\left(  \mathrm{H.4}\right)  $~and
Lemma\ \ref{moml} i), ii) show that the assumptions of Theorem \ref{GenTCL}
are satisfied.
\end{proof}

\begin{proposition}
\label{centr}Under assumptions $\left(  \mathrm{H.1}\right)  ,$ $\left(
\mathrm{H.2}\right)  ,$ $\left(  \mathrm{H.5}\right)  ,$ $\left(
\mathrm{H.6}\right)  $, we have for all $x\in F$,
\[
\frac{nc}{\kappa_{n}\left(  x\right)  }\left(  \mathbb{E}\left(  \widehat
{f}_{n}\left(  x\right)  \right)  -f\left(  x\right)  \right)  \rightarrow
0\;\mathrm{ as }\; n\to\infty.
\]
\end{proposition}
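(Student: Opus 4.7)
The plan is to write $\mathbb{E}(\widehat{f}_n(x)) - f(x)$ as a sum of two deterministic errors: one coming from the bias correction (the "expected value minus $\mu_{n,r}$" term already controlled in Lemma~\ref{moml}~iii) and one coming from the discretization of $f$ through $f_{n,r}$ (the term addressed by (H.5)). Indeed, recalling that $\xi_{n,r} = (1+N_{n,r}^{-1})\, nc\nu_{n,r} Y_{n,r}^*$ and $\mu_{n,r}=nc\nu_{n,r}f_{n,r}$, the definition of $\widehat{f}_n(x)$ gives
\[
\mathbb{E}(\widehat{f}_n(x)) = \frac{1}{nc}\sum_{r=1}^{k_n}\kappa_{n,r}(x)\,\mathbb{E}(\xi_{n,r}) = \sum_{r=1}^{k_n}\nu_{n,r}\kappa_{n,r}(x)f_{n,r} + \frac{1}{nc}\sum_{r=1}^{k_n}\kappa_{n,r}(x)\bigl[\mathbb{E}(\xi_{n,r})-\mu_{n,r}\bigr].
\]

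Multiplying by $nc/\kappa_n(x)$ and subtracting $f(x)\,nc/\kappa_n(x)$, the left-hand side of the proposition becomes
\[
\frac{nc}{\kappa_n(x)}\Bigl[\sum_{r=1}^{k_n}\nu_{n,r}\kappa_{n,r}(x)f_{n,r} - f(x)\Bigr] + \sum_{r=1}^{k_n} w_{n,r}(x)\bigl[\mathbb{E}(\xi_{n,r})-\mu_{n,r}\bigr].
\]
The first term tends to $0$ directly by assumption (H.5). For the second term, I would apply the triangle inequality together with the uniform control from Lemma~\ref{moml}~iii), which yields
\[
\Bigl|\sum_{r=1}^{k_n} w_{n,r}(x)\bigl[\mathbb{E}(\xi_{n,r})-\mu_{n,r}\bigr]\Bigr| \le \Bigl(\sum_{r=1}^{k_n}|w_{n,r}(x)|\Bigr)\,\max_{1\le r\le k_n}|\mathbb{E}(\xi_{n,r})-\mu_{n,r}|,
\]
which is $O\bigl(\sum_r |w_{n,r}(x)|\max((n\delta_n)^2, n\nu_n e^{-mcn\nu_n}, \Delta_n)\bigr)$ and therefore vanishes by (H.6).

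There is essentially no obstacle beyond this bookkeeping: the hard analytic content (the exact form of the bias $\mathbb{E}(\xi_{n,r}) - \mu_{n,r}$ with its three competing error sources from the overshoot above $m_{n,r}$, the exponential tail near the boundary, and the discretization oscillation) has already been packaged into Lemma~\ref{moml}~iii), and the matching sum-of-weights condition is exactly (H.6). The mildly delicate point is simply to notice that (H.6) controls $\sum_r|w_{n,r}(x)|$ times the relevant maximum, so the decomposition above is tight enough and no further refinement is required.
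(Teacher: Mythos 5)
Your proof is correct and follows essentially the same route as the paper: the same decomposition of the rescaled bias into the (H.5) discretization term and the $\sum_r w_{n,r}(x)(\mathbb{E}(\xi_{n,r})-\mu_{n,r})$ term, with the latter bounded by Lemma~\ref{moml}~iii) and (H.6). The only cosmetic difference is that you spell out the algebraic identity $\mathbb{E}(\widehat f_n(x))=\tfrac{1}{nc}\sum_r \kappa_{n,r}(x)\mathbb{E}(\xi_{n,r})$ before applying the triangle inequality, whereas the paper does it in one step.
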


\begin{proof}
For all $x\in F$, we get, by the triangle inequality and assumption $\left(
\mathrm{H.5}\right)  $,%
\begin{align*}
\frac{nc}{\kappa_{n}\left(  x\right)  }\left|  \mathbb{E}\left(  \widehat
{f}_{n}\left(  x\right)  \right)  -f\left(  x\right)  \right|   &  =\left|
\sum_{r=1}^{k_{n}}w_{n,r}\left(  x\right)  \mathbb{E}\left(  \xi_{n,r}\right)
-\frac{nc}{\kappa_{n}\left(  x\right)  }f\left(  x\right)  \right| \\
&  \leq\left|  \sum_{r=1}^{k_{n}}w_{n,r}\left(  x\right)  \left(
\mathbb{E}\left(  \xi_{n,r}\right)  -\mu_{n,r}\right)  \right|  +\left|
\sum_{r=1}^{k_{n}}w_{n,r}\left(  x\right)  \mu_{n,r}-nc\frac{f\left(
x\right)  }{\kappa_{n}(x)}\right| \\
&  \leq\left(  \sum_{r=1}^{k_{n}}|w_{n,r}(x)|\right)  \max_{1\leq r\leq k_{n}%
}|\mathbb{E}(\xi_{n,r})-\mu_{n,r}|+o(1).
\end{align*}
Lemma \ref{moml} iii) and condition $\left(  \mathrm{H.6}\right)  $ give the result.
\end{proof}

\noindent Theorem \ref{TCL} is a straightforward consequence of
Proposition~\ref{TCLseul} and Proposition~\ref{centr}. The following lemma
shows that $\widehat{c}_{n}$ converges to $c$ almost surely. In particular, it
implies that $c$ can be replaced by $\widehat{c}_{n}$ in the above theorem.

\begin{lemma}
\label{remplace1} Under assumptions $\left(  \mathrm{H.1}\right)  $ and
$\left(  \mathrm{H.2}\right)  $, for all $\delta>0$, there exist
$\alpha_{\delta}>0$ and $n_{\delta}>0$ such that $\forall n\geq n_{\delta
},\;\mathbb{P}(|\widehat{c}_{n}-c|\geq\delta)\leq3\exp{(-n\alpha_{\delta})}. $
\end{lemma}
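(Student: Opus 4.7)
My plan is to control $N_n(S)/n$ and $\widehat{a}_n$ separately, each with an exponential concentration bound, and then combine them through the identity
\[
\widehat{c}_n - c = \frac{N_n(S)/n - c\widehat{a}_n}{\widehat{a}_n}.
\]
Under $(\mathrm{H.2})$ one has $0 < m \le a := \int_E f\,d\nu \le M < \infty$, so on any event where $|\widehat{a}_n - a|$ is small enough, the denominator is bounded away from $0$. Concretely, I aim to show that for every $\epsilon>0$ and $n$ large,
\[
\mathbb{P}\bigl(|N_n(S)/n - ca| > \epsilon\bigr) \le 2 e^{-n \alpha_1(\epsilon)}
\quad\text{and}\quad
\mathbb{P}\bigl(|\widehat{a}_n - a| > \epsilon\bigr) \le 2 e^{-n \alpha_2(\epsilon)}.
\]
Choosing $\epsilon=\epsilon(\delta,a,c)$ small enough so that the intersection of the two good events forces $|\widehat{c}_n - c| \le \delta$ (via the triangle inequality applied to the identity above) will then yield the lemma by a union bound.

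The first probability is the easy one: $N_n(S)$ is Poisson with parameter $nca$, and the classical Chernoff bound for Poisson variables delivers the claim with $\alpha_1(\epsilon)>0$. For the second, I would write $nc\widehat{a}_n = \sum_{r=1}^{k_n}\xi_{n,r}$, which is a sum of independent variables since the cells $D_{n,r}$ are disjoint, and split
\[
\widehat{a}_n - a = \frac{1}{nc}\sum_{r=1}^{k_n}\bigl(\xi_{n,r} - \mathbb{E}\xi_{n,r}\bigr) + \bigl(\mathbb{E}\widehat{a}_n - a\bigr).
\]
The bias term is controlled by Lemma~\ref{moml}~iii): since $\sum_r \mu_{n,r} = nc a$, one has $|\mathbb{E}\widehat{a}_n - a| \le (k_n/(nc))\max_r|\mathbb{E}\xi_{n,r} - \mu_{n,r}|$, and the elementary estimate $k_n/n \le 1/(n\nu_n)\to 0$ (which follows from $\sum_r\nu_{n,r}=1$ and $(\mathrm{H.1})$), combined with the three vanishing contributions $(n\delta_n)^2$, $n\nu_n e^{-mcn\nu_n}$, $\Delta_n$ from $(\mathrm{H.1})$--$(\mathrm{H.2})$, makes this bias $o(1)$, hence smaller than $\epsilon/2$ for $n$ large. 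For the centered sum I plan to invoke Bernstein's inequality: Lemma~\ref{moml}~i) provides a uniform Bernstein moment bound $\mathbb{E}|\xi_{n,r} - \mathbb{E}\xi_{n,r}|^\ell \le C_1 \ell! H^{\ell-2}$ valid for all $\ell \ge 2$, all $r$ and all $n$ large, yielding
\[
\mathbb{P}\Bigl(\Bigl|\sum_r(\xi_{n,r} - \mathbb{E}\xi_{n,r})\Bigr| > nc\epsilon/2\Bigr) \le 2\exp\Bigl(-\frac{(nc\epsilon/2)^2}{2C_1 k_n + H nc\epsilon}\Bigr).
\]

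The delicate step is arranging for this Bernstein bound to decay exponentially in $n$, not merely in $k_n$ (which may grow slowly). This is where $k_n/n \to 0$ is crucial: it ensures that for $n$ large the denominator is dominated by $Hnc\epsilon$, so the bound becomes $\le 2e^{-n\alpha_2(\epsilon)}$ with $\alpha_2(\epsilon)>0$. A naive bound on $\xi_{n,r}$ by its deterministic maximum $2ncM$ would be catastrophic here, which is precisely why we need the sub-exponential moment estimate of Lemma~\ref{moml}~i). Assembling the two exponential bounds and absorbing the prefactor $4$ into the form $3\exp(-n\alpha_\delta)$ (by slightly decreasing the rate, valid for $n$ large) completes the proof.
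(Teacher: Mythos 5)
Your proposal is correct and follows essentially the same route as the paper: both decompose the problem into an exponential deviation bound for the Poisson variable $N_n(S)$ (you via Chernoff, the paper via Cramér's theorem applied to $N_n(S)$ written as a sum of $n$ iid Poisson$(ac)$ variables) and a Bernstein-type bound for $\widehat{a}_n$, the latter relying exactly on Lemma~\ref{moml}~i) for the uniform sub-exponential moment bounds and Lemma~\ref{moml}~iii) together with $k_n/n\to 0$ from (H.1) to control the bias. The algebraic identity you use to glue the two pieces together differs superficially from the paper's triangle-inequality decomposition $|\widehat{c}_n - c| \le (n\widehat{a}_n)^{-1}|N_n(S)-nac| + ac|\widehat{a}_n^{-1}-a^{-1}|$, but leads to the same two events, and your remark about absorbing the prefactor $4$ into a $3\exp(-n\alpha_\delta)$ form by slightly lowering the rate is the right fix for the cosmetic discrepancy with the stated constant.
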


\begin{proof}
We have
\[
|\widehat{c}_{n}-c|\leq\frac{1}{n\widehat{a}_{n}}|N_{n}(S)-nac|+ac\left|
\frac{1}{\widehat{a}_{n}}-\frac{1}{a}\right|  .
\]
Let $\delta>0$ and $\eta_{\delta}=\min(a/2,\;a\delta/(4c))$. Then,
\[
|\widehat{c}_{n}-c|\leq|\widehat{c}_{n}-c|\mathbf{1}_{\left\{  |\widehat
{a}_{n}-a|>\eta_{\delta}\right\}  }+\left(  \frac{2}{na}|N_{n}(S)-nac|+\frac
{2c\eta_{\delta}}{a}\right)  \mathbf{1}_{\left\{  |\widehat{a}_{n}-a|\leq
\eta_{\delta}\right\}  },
\]
and therefore
\begin{align}
\mathbb{P}(|\widehat{c}_{n}-c|\geq\delta)  &  \leq\mathbb{P}\left(  \frac
{1}{na}|N_{n}(S)-nac|\geq\frac{1}{2}\left(  \delta-\frac{2c\eta_{\delta}}%
{a}\right)  \right)  +\mathbb{P}(|\widehat{a}_{n}-a|>\eta_{\delta})\nonumber\\
&  \leq\mathbb{P}\left(  \frac{N_{n}(S)}{na}\notin\left]  c-\frac{\delta}%
{4},\;c+\frac{\delta}{4}\right[  \right)  +\mathbb{P}(|\widehat{a}_{n}%
-a|>\eta_{\delta}). \label{eqmajo}%
\end{align}
Let us consider the first term of (\ref{eqmajo}). Since $N_{n}(S)$ has a
Poisson distribution with mean $nac$, it can be expanded as $N_{n}%
(S)=\sum_{k=1}^{n}\pi_{k}$, where the $\pi_{k}$ are independent Poisson random
variables with mean $ac$. Introducing $\Lambda_{\pi}(s)=\log\mathbb{E}%
(e^{s\pi})=ac(e^{s}-1)$ and denoting
\[
\Lambda_{\pi}^{\ast}(t)=\sup_{s\in\mathbb{R}}(st-\Lambda_{\pi}(s))=\left\{
\begin{array}
[c]{lll}%
t\log{(t/ac)}-t+ac & \mathrm{if} & t\geq0\\
+\infty & \mathrm{if} & t<0,
\end{array}
\right.
\]
Cramer's theorem (see \textsc{Dembo} \& \textsc{Zeitouni}~(1991),
Theorem~2.2.3) yields
\[
\underset{n\rightarrow\infty}{\lim\sup\;}\frac{1}{n}\log\mathbb{P}\left(
\frac{N_{n}(S)}{na}\notin\left]  c-\frac{\delta}{4},\;c+\frac{\delta}%
{4}\right[  \right)  \leq-\inf\left\{  \Lambda_{\pi}^{\ast}(t),\;t\notin
\left]  c-\frac{\delta}{4},\;c+\frac{\delta}{4}\right[  \right\}  <0.
\]
Consequently, there exists $\alpha_{\delta}^{\prime}>0$ such that
\begin{equation}
\forall n\geq1,\mathbb{P}\left(  \frac{N_{n}(S)}{na}\notin\left]
c-\frac{\delta}{4},\;c+\frac{\delta}{4}\right[  \right)  \leq\exp
{(-n\alpha_{\delta}^{\prime})}. \label{eqmajo2}%
\end{equation}
Consider now the second term of (\ref{eqmajo}) and observe that
\[
\mathbb{E}(\widehat{a}_{n})-a=\frac{1}{nc}\sum_{r=1}^{k_{n}}(\mathbb{E}%
(\xi_{n,r})-\mu_{n,r})
\]
Lemma~\ref{moml} iii) implies that
\[
|\mathbb{E}(\widehat{a}_{n})-a|=\frac{k_n}{n}
\left(  \max\left(  \Delta_{n},(n\delta_{n})^2,
n\nu_{n}\exp\left(  -mnc\nu_{n}\right)  \right)  \right)
\]
which converges to 0 under $\left(  \mathrm{H.1}\right)  $ and $\left(
\mathrm{H.2}\right)  $. Therefore, there exists $n_{\delta}>0$ such that
\[
\forall n\geq n_{\delta},\;\mathbb{P}(|\widehat{a}_{n}-a|\geq\eta_{\delta
})\leq\mathbb{P}(|\widehat{a}_{n}-\mathbb{E}(\widehat{a}_{n})|\geq\eta
_{\delta}/2)=\mathbb{P}\left(  \left|  \sum_{r=1}^{k_{n}}((\xi_{n,r}%
-\mathbb{E}(\xi_{n,r}))\right|  \geq\eta_{\delta}nc/2\right)  ,
\]
and in view of Lemma~\ref{moml} i), applying Bernstein's inequality (see
\textsc{Shorack\thinspace\&\thinspace Wellner}~(1986), p.~855) yields that for
some constants $C_{1}$ and $C_{2},$%
\begin{equation}
\forall n\geq n_{\delta},\;\mathbb{P}(|\widehat{a}_{n}-a|\geq\eta_{\delta
})\leq2\exp{\left(  -\frac{\eta_{\delta}^{2}c^{2}n^{2}}{C_{1}k_{n}+C_{2}%
\eta_{\delta}cn}\right)  }\leq2\exp{\left(  -\frac{\eta_{\delta}^{2}c^{2}%
}{C_{1}+C_{2}\eta_{\delta}c}n\right)  }. \label{eqmajo3}%
\end{equation}
Defining $\alpha_{\delta}=\min(\alpha_{\delta}^{\prime},\eta_{\delta}^{2}%
c^{2}/(C_{1}+C_{2}\eta_{\delta}c))$ and collecting (\ref{eqmajo}%
)--(\ref{eqmajo3}) give the result.
\end{proof}

\begin{proof}
[Proof of Corollary \ref{coromain}] It remains to verify that the difference
\[
D_{n}(x)=\frac{n}{\kappa_{n}(x)}(\widehat{c}_{n}-c)(\widehat{f}_{n}\left(
x\right)  -f(x))
\]
can be neglected in the central limit theorem. In this aim, let $(x_{1}%
,\dots,x_{p})\subset F$. For all $\eta>0$ and $\delta>0$, we have
\[
\mathbb{P}\left(  \max_{1\leq j\leq p}|D_{n}(x_{j})|\geq\eta\right)
\leq\mathbb{P}(|\widehat{c}_{n}-c|\geq\delta)+\mathbb{P}\left(  \max_{1\leq
j\leq p}\frac{nc}{\kappa_{n}(x_{j})}|\widehat{f}_{n}(x_{j})-f(x_{j})|\geq\eta
c/\delta\right)  ,
\]
where the first term converges to 0 as $n\rightarrow\infty$ in view of
Lemma~\ref{remplace1}. Thus,
\begin{align}
\underset{n\rightarrow\infty}{\lim\sup\;}\mathbb{P}\left(  \max_{1\leq j\leq
p}|D_{n}(x_{j})|\geq\eta\right)   &  \leq\underset{n\rightarrow\infty}%
{\lim\sup\;}\mathbb{P}\left(  \max_{1\leq j\leq p}\frac{nc}{\kappa_{n}(x_{j}%
)}|\widehat{f}_{n}(x_{j})-f(x_{j})|\geq\eta c/\delta\right)  \nonumber\\
&  =\mathbb{P}\left(  \max_{1\leq j\leq p}|G_{j}|\geq\eta c/\delta\right)
,\label{eqqq}%
\end{align}
where $(G_{1},\dots,G_{p})$ follows the distribution $N(0,\Sigma_{(x_{1}%
,\dots,x_{p})})$ under the conditions of Theorem~\ref{TCL}. Letting
$\delta\rightarrow0$ in (\ref{eqqq}) yields
\[
\underset{n\rightarrow\infty}{\lim\sup\;}\mathbb{P}\left(  \max_{1\leq j\leq
p}|D_{n}(x_{j})|\geq\eta\right)  =0,
\]
and therefore $\{D_{n}(x_{j}),\;1\leq j\leq p\}\overset{\mathbb{P}%
}{\longrightarrow}0.$
\end{proof}

\section{Applications}
\label{applications}

We first introduce a general class of kernel estimators which
will be shown to satisfy our main result given in Theorem~\ref{TCL}.
Then, we focus on the
particular cases of Parzen-Rosenblatt and Dirichlet kernels.  

\subsection{General kernel estimates}

Consider an unbiaised version of Geffroy's estimator: 
\[
\overline{f}_{n}\left(  x\right)  =\sum_{r=1}^{k_{n}}\mathbf{1}_{I_{n,r}%
}\left(  x\right)  \left(  1+N_{n,r}^{-1}\right)  Y_{n,r}^{\ast}.
\]
In order to smooth this estimator, a sequence
$
K_{n}:E\times E\rightarrow\mathbb{R},
$
of general smoothing kernels is introduced.
Conditions on this sequence will be imposed later.
The general kernel estimate is defined by
\begin{align}
\widehat{f}_{n}(x) &  =\int_{E}K_{n}\left(  x,t\right)  \overline{f}%
_{n}\left(  t\right)  \nu\left(  dt\right)  \nonumber\\
&  =\sum_{r=1}^{k_{n}}\left(  \int_{I_{n,r}}K_{n}\left(  x,t\right)
\nu\left(  dt\right)  \right)  \left(  1+N_{n,r}^{-1}\right)  Y_{n,r}^{\ast
}.\label{IntEst}%
\end{align}
It appears that (\ref{IntEst}) is a particular case of (\ref{defest}) with
$\kappa_{n,r}\left(  x\right)  =\nu_{n,r}^{-1}\int_{I_{n,r}}K_{n}\left(
x,t\right)  \nu\left(  dt\right)  .$ In the case where the calculation of this
mean value is computationally expensive, it can be approximated by
$K_{n}\left(  x,x_{n,r}\right)  $ for some $x_{n,r}\in I_{n,r}$, leading to
the simplified estimate
\begin{equation}
\widetilde{f}_{n}(x)=\sum_{r=1}^{k_{n}}\nu_{n,r}\;K_{n}\left(  x,x_{n,r}%
\right)  \;\left(  1+N_{n,r}^{-1}\right)  Y_{n,r}^{\ast},\label{BadEst}%
\end{equation}
which is still a particular case of (\ref{defest}) with $\kappa_{n,r}\left(
x\right)  =K_{n}\left(  x,x_{n,r}\right)  .$\newline In order to introduce the
assumptions needed on $K_{n},$ we set, for all $x\in E,$%
\[
\Gamma_{n}\left(  x\right)  =\;\underset{r\leq k_{n}}{\max}\sup\left\{
K_{n}(x,t)-K_{n}(x,s):\left(  s,t\right)  \in I_{n,r}\times I_{n,r}\right\}
\]
and
\[
\Psi_{n}\left(  x\right)  =\left|  \int_{E}K_{n}(x,t)f\left(  t\right)
\nu\left(  dt\right)  -f\left(  x\right)  \right|  .
\]
For the sake of simplicity,  assume that, for all $n\geq1,$ the
partitions $\left\{  I_{n,r}:\;1\leq r\leq k_{n}\right\}  $ are such that
$\nu_{n,r}=k_{n}^{-1}$ for all $r\leq k_{n}$.  Finally, for all function
$
g:E\rightarrow\mathbb{R},
$
we note
\[
\left\|  g\right\|  _{1}=\int_{E}\left|  g\left(  t\right)  \right|
\nu\left(  dt\right)  ,\;\;\left\|  g\right\|  _{2}=\left(  \int_{E}g\left(
t\right)  ^{2}\nu\left(  dt\right)  \right)  ^{1/2}\;\;\mathrm{and}%
\;\;\left\|  g\right\|  _{E}=\,\underset{t\in E}{\sup}\left|  g\left(
t\right)  \right|  .
\]
In this context, the general assumptions (H.1)--(H.6) can be simplified as:\\
$\left(  \mathrm{H}^{\prime}\mathrm{.1}\right)  \;k_{n}\uparrow\infty$ and
$n^{-1}k_{n}\log\left(  n\right)  \rightarrow0$ as $n\rightarrow\infty
$.\medskip\newline $\left(  \mathrm{H.2}\right)  \;$~$0<m\leq M<+\infty$ and
$nk_{n}^{-1}\Delta_{n}\rightarrow0$ as $n\rightarrow\infty$.\medskip
\newline $\left(  \mathrm{K.0}\right)  $ For all $n\geq1,$ $\int_{E\times
E}\left|  K_{n}(x,t)\right|  \nu\left(  dx\right)  \nu\left(  dt\right)
<\infty.$\newline $\left(  \mathrm{K.1}\right)  \;$For all $\left(
x_{1},x_{2}\right)  \in F\times F$,
\[
\Gamma_{n}\left(  x_{1}\right)  \left\|  K_{n}(x_{2},\;\mathbf{.}\;)\right\|
_{1}=o\left(  \left\|  K_{n}(x_{1},\;\mathbf{.}\;)\right\|  _{2}\left\|
K_{n}(x_{2},\;\mathbf{.}\;)\right\|  _{2}\right)  \;\mathrm{as\;}%
n\rightarrow\infty.
\]
$\left(  \mathrm{K.2}\right)  \;$For all $\left(  x_{1},x_{2}\right)  \in
F\times F$,
\[
\left\langle K_{n}(x_{1},\;\mathbf{.}\;),K_{n}%
(x_{2},\;\mathbf{.}\;)\right\rangle _{2}\left(  \left\|  K_{n}(x_{1}%
,\;\mathbf{.}\;)\right\|  _{2}\left\|  K_{n}(x_{2},\;\mathbf{.}\;)\right\|
_{2}\right)  ^{-1}\rightarrow\;\sigma(x_{1},x_{2})\;\mathrm{as\;}%
n\rightarrow\infty.
\]
$\left(  \mathrm{K.3}\right)  \;$For all $x\in F$,
\[
k_{n}^{-1/2}\left\|  K_{n}(x,\;\mathbf{.}\;)\right\|  _{2}^{-1}\;\left\|
K_{n}(x,\;\mathbf{.}\;)\right\|  _{E}\rightarrow0\;\mathrm{as\;}%
n\rightarrow\infty.
\]
$\left(  \mathrm{K.4}\right)  \;$For all $x\in F$,
\[
nk_{n}^{-1/2}\left\|  K_{n}(x,\;\mathbf{.}\;)\right\|  _{2}^{-1}\;\max\left(
\Psi_{n}\left(  x\right)  ;\;\Delta_{n}\left\|  K_{n}(x,\;\mathbf{.}%
\;)\right\|  _{1}\right)  \rightarrow0\;\mathrm{as\;}n\rightarrow\infty.
\]
$\left(  \mathrm{K.5}\right)  \;$For all $x\in F$,
\[
nk_{n}^{-1/2}\left\|  K_{n}(x,\;\mathbf{.}\;)\right\|  _{2}^{-1}\left(
\sum_{r=1}^{k_{n}}\;\int_{I_{n,r}}\left(  K_{n}\left(  x,t\right)
-K_{n}\left(  x,x_{n,r}\right)  \right)  \nu\left(  dt\right)  \right)
\rightarrow0\;\mathrm{as\;}n\rightarrow\infty.
\]
The results established in Section~\ref{secmain} yield:
\begin{theorem}
\label{genKern}a) Under $\left(  \mathrm{H}^{\prime}\mathrm{.1}\right)
,\;\left(  \mathrm{H.2}\right)  ,\;\left(  \mathrm{K.0}\right)  -\left(
\mathrm{K.4}\right)  ,$ and for all $\left(  x_{1},...,x_{p}\right)  \subset
F,$%
\begin{equation}
\left\{  nck_{n}^{-1/2}\left\|  K_{n}(x_{j},\;\mathbf{.}\;)\right\|  _{2}%
^{-1}\left(  \widehat{f}_{n}\left(  x_{j}\right)  -f\left(  x_{j}\right)
\right)  :1\leq j\leq p\right\}  \underset{\mathcal{D}}{\rightarrow}N\left(
0,\Sigma_{\left(  x_{1},...,x_{p}\right)  }\right)  .\label{Kern1}%
\end{equation}
b) If, moreover, $\left(  \mathrm{K.5}\right)  $ holds, then for all $\left(
x_{1},...,x_{p}\right)  \subset F,$%
\begin{equation}
\left\{  nck_{n}^{-1/2}\left\|  K_{n}(x_{j},\;\mathbf{.}\;)\right\|  _{2}%
^{-1}\left(  \widetilde{f}_{n}\left(  x_{j}\right)  -f\left(  x_{j}\right)
\right)  :1\leq j\leq p\right\}  \underset{\mathcal{D}}{\rightarrow}N\left(
0,\Sigma_{\left(  x_{1},...,x_{p}\right)  }\right)  .\label{Kern2}%
\end{equation}
c) $\left(  \ref{Kern1}\right)  $ and $\left(  \ref{Kern2}\right)  $ also hold
when $c$ is replaced by $\hat{c}_{n}$.  
\end{theorem}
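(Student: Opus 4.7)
The plan is to reduce parts (a) and (b) to Theorem~\ref{TCL} by verifying hypotheses $(\mathrm{H}.1)$--$(\mathrm{H}.6)$, and to derive part (c) from Corollary~\ref{coromain}. For part (a), the estimator $\widehat{f}_n$ is of the form (\ref{defest}) with $\kappa_{n,r}(x)=\nu_{n,r}^{-1}\int_{I_{n,r}}K_n(x,t)\nu(dt)$. Under the uniform partition $\nu_{n,r}\equiv k_n^{-1}$, one has $\nu_n=k_n^{-1}$ and $\delta_n=k_n^{-1}\Delta_n$, so $(\mathrm{H}.1)$ reduces to $n/k_n\to\infty$ (implied by $(\mathrm{H}^{\prime}\mathrm{.1})$), while $(\mathrm{H}.2)$ of Theorem~\ref{TCL} coincides with its restatement here.

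The technical core is the oscillation bound $|K_n(x,t)-\kappa_{n,r}(x)|\le\Gamma_n(x)$ valid for $t\in I_{n,r}$, from which a simple expansion---whose linear cross-term vanishes by the mean-value property of $\kappa_{n,r}$---gives
\[
\Bigl|\sum_{r=1}^{k_n}\kappa_{n,r}(x_1)\kappa_{n,r}(x_2)-k_n\langle K_n(x_1,\cdot),K_n(x_2,\cdot)\rangle_2\Bigr|\le k_n\,\Gamma_n(x_1)\Gamma_n(x_2).
\]
Combined with $(\mathrm{K}.1)$, this yields both $\kappa_n(x)^2\sim k_n\|K_n(x,\cdot)\|_2^2$ and assumption $(\mathrm{H}.3)$ via $(\mathrm{K}.2)$. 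Then $(\mathrm{H}.4)$ follows from $\max_r|\kappa_{n,r}(x)|\le\|K_n(x,\cdot)\|_E$ combined with $(\mathrm{K}.3)$; for $(\mathrm{H}.5)$, an elementary Fubini computation gives
\[
\Bigl|\sum_{r=1}^{k_n}\nu_{n,r}\kappa_{n,r}(x)f_{n,r}-f(x)\Bigr|\le\Psi_n(x)+\Delta_n\|K_n(x,\cdot)\|_1,
\]
which is $o(\kappa_n(x)/n)$ by $(\mathrm{K}.4)$. Finally $(\mathrm{H}.6)$ uses the bound $\sum_r|w_{n,r}(x)|\le k_n^{1/2}\|K_n(x,\cdot)\|_1/\|K_n(x,\cdot)\|_2$ (Cauchy--Schwarz, together with $\sum_r|\kappa_{n,r}(x)|\le k_n\|K_n(x,\cdot)\|_1$), and the three factors inside the maximum are handled respectively by $(\mathrm{K}.4)$, $(\mathrm{H}^{\prime}\mathrm{.1})$, and $(\mathrm{H}.2)$.

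For part (b), the cleanest route is to show that the difference
\[
\widetilde{f}_n(x)-\widehat{f}_n(x)=\sum_{r=1}^{k_n}\tau_{n,r}(x)\Bigl(1+\frac{1}{N_{n,r}}\Bigr)Y_{n,r}^{\ast},\qquad \tau_{n,r}(x):=\nu_{n,r}K_n(x,x_{n,r})-\int_{I_{n,r}}K_n(x,t)\nu(dt),
\]
is negligible after normalization by $nck_n^{-1/2}\|K_n(x,\cdot)\|_2^{-1}$: the expectation reduces, via Lemma~\ref{moml} iii), to $\sum_r\tau_{n,r}(x)$, which is precisely the quantity that $(\mathrm{K}.5)$ forces to be $o(k_n^{1/2}\|K_n(x,\cdot)\|_2/n)$; the variance is $O(\sum_r\tau_{n,r}^2(x))$ by Lemma~\ref{moml} ii) and is absorbed by the same oscillation estimate. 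Part (c) is then an immediate application of Corollary~\ref{coromain} in both settings. The main obstacle throughout is the careful bookkeeping of the oscillation error $\Gamma_n$: the equivalence $\kappa_n(x)^2\sim k_n\|K_n(x,\cdot)\|_2^2$, on which every subsequent verification rests, must be extracted from $(\mathrm{K}.1)$, and the resulting remainders then tracked cleanly through each of the six conditions.
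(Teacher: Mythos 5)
Your plan matches the paper's: set $\kappa_{n,r}(x)=k_n\int_{I_{n,r}}K_n(x,t)\,\nu(dt)$, verify (H.1)--(H.6), then invoke Theorem~\ref{TCL} and Corollary~\ref{coromain}. Most verifications are fine, but there is a genuine gap in your treatment of (H.3). The mean-value expansion $K_n(x_i,t)=\kappa_{n,r}(x_i)+\varepsilon_i(t)$ with $\int_{I_{n,r}}\varepsilon_i\,d\nu=0$ is correct and does kill the cross terms, but bounding both factors of the quadratic remainder by the oscillation gives the estimate $k_n\Gamma_n(x_1)\Gamma_n(x_2)$, and this is \emph{not} controlled by (K.1), which bounds $\Gamma_n(x_1)\|K_n(x_2,\cdot)\|_1$ instead; nothing in (K.0)--(K.4) forces $\Gamma_n(x_2)$ to be dominated by $\|K_n(x_2,\cdot)\|_1$. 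Concretely, in the Parzen setting with $d=1$, $k_n=n/\log^2 n$, $h_n=n^{-3/4}$, $\alpha=\tfrac12$, conditions (i)--(iii) of Corollary~\ref{coroPR} (hence (H$'$.1), (H.2), (K.0)--(K.4)) all hold, yet
\[
k_n\,\Gamma_n(x_1)\Gamma_n(x_2)\asymp n^{2}\log^{2}n,\qquad k_n\|K_n(x_1,\cdot)\|_2\|K_n(x_2,\cdot)\|_2\asymp n^{7/4}/\log^{2}n,
\]
so your claimed remainder is asymptotically \emph{larger} than the main term, and the equivalence $\kappa_n(x)^2\sim k_n\|K_n(x,\cdot)\|_2^2$ (on which the rest of your verification rests) cannot be deduced this way. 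The fix is to make the bound asymmetric: keep one full kernel inside the integral and put the oscillation only on the other, writing $K_n(x_1,s)K_n(x_2,t)=K_n(x_1,t)K_n(x_2,t)+(K_n(x_1,s)-K_n(x_1,t))K_n(x_2,t)$ so the remainder is $O(k_n\Gamma_n(x_1)\|K_n(x_2,\cdot)\|_1)$, which is exactly what the paper does; equivalently, in your expansion replace $\int_{I_{n,r}}|\varepsilon_2|\,d\nu\le\Gamma_n(x_2)\nu_{n,r}$ by $\int_{I_{n,r}}|\varepsilon_2|\,d\nu\le2\int_{I_{n,r}}|K_n(x_2,t)|\,\nu(dt)$. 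Then (K.1) gives $o(k_n\|K_n(x_1,\cdot)\|_2\|K_n(x_2,\cdot)\|_2)$ and (H.3) follows from (K.2) as you intended.

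As a secondary remark, for part (b) your expectation-plus-variance route is a workable but heavier alternative to the paper's argument, which is purely deterministic: since $1+N_{n,r}^{-1}\le2$ and $Y_{n,r}^{\ast}\le M$ almost surely, $|\widehat f_n(x)-\widetilde f_n(x)|\le2M\sum_{r}|\int_{I_{n,r}}(K_n(x,t)-K_n(x,x_{n,r}))\,\nu(dt)|$, and (K.5) concludes without any further appeal to the moment lemmas; your approach additionally requires controlling $\sum_r\tau_{n,r}^2(x)$, which you leave implicit.
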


\begin{proof}
a) For all $x\in F$, we just verify $\left(  \mathrm{H.1}\right)  -\left(
\mathrm{H.6}\right)  $ for
$
\kappa_{n,r}\left(  x\right)  :=k_{n}\int_{I_{n,r}}K_{n}\left(  x,t\right)
\nu\left(  dt\right)  .\\
$
$\left(  \mathrm{H.1}\right)  \;$and $\left(  \mathrm{H.2}\right)  $ hold
trivialy. Moreover, by $\left(  \mathrm{K.1}\right)  $,
\begin{align*}
\sum_{r=1}^{k_{n}}\kappa_{n,r}\left(  x_{1}\right)  \kappa_{n,r}\left(
x_{2}\right)   &  =k_{n}^{2}\sum_{r=1}^{k_{n}}\int_{I_{n,r}}\int_{I_{n,r}%
}K_{n}(x_{1},s)K_{n}(x_{2},t)\nu\left(  ds\right)  \nu\left(  dt\right)  \\
&  =k_{n}\left\langle K_{n}(x_{1},\;\mathbf{.}\;),K_{n}(x_{2},\;\mathbf{.}%
\;)\right\rangle _{2}\\
&  +k_{n}^{2}\sum_{r=1}^{k_{n}}\int_{I_{n,r}}\int_{I_{n,r}}K_{n}%
(x_{2},t)\left(  K_{n}(x_{1},s)-K_{n}(x_{1},t)\right)  \nu\left(  dt\right)
\nu\left(  ds\right)  \\
&  =k_{n}\left\langle K_{n}(x_{1},\;\mathbf{.}\;),K_{n}(x_{2},\;\mathbf{.}%
\;)\right\rangle _{2}+k_{n}O\left(  \sum_{r=1}^{k_{n}}\;\Gamma_{n}\left(
x_{1}\right)  \int_{I_{n,r}}\left|  K_{n}(x_{2},t)\right|  \nu\left(
dt\right)  \right)  \\
&  =k_{n}\left[  \left\langle K_{n}(x_{1},\;\mathbf{.}\;),K_{n}(x_{2}%
,\;\mathbf{.}\;)\right\rangle _{2}+o\left(  \left\|  K_{n}(x_{1}%
,\;\mathbf{.}\;)\right\|  _{2}\left\|  K_{n}(x_{2},\;\mathbf{.}\;)\right\|
_{2}\right)  \right]  .
\end{align*}
Hence,
\begin{equation}
\kappa_{n}\left(  x\right)  =k_{n}^{1/2}\left\|  K_{n}(x,\;\mathbf{.}%
\;)\right\|  _{2}\left(  1+o\left(  1\right)  \right)  ,\label{k2part}%
\end{equation}
and $\left(  \mathrm{K.2}\right)  $ leads to
\[
\sum_{r=1}^{k_{n}}w_{n,r}\left(  x_{1}\right)  w_{n,r}\left(  x_{2}\right)
=\sigma(x_{1},x_{2})+o\left(  1\right)  ,
\]
which is $\left(  \mathrm{H.3}\right)  .$ Now, $\left(  \ref{k2part}
\right)$ entails for all large $n,$%
\begin{align*}
\max_{1\leq r\leq k_{n}}|w_{n,r}(x)| &  \leq2k_{n}^{-1/2}\left\|
K_{n}(x,\;\mathbf{.}\;)\right\|  _{2}^{-1}\;k_{n}\max_{1\leq r\leq k_{n}%
}\left|  \int_{I_{n,r}}K_{n}(x,t)\nu\left(  dt\right)  \right|  \\
&  \leq2k_{n}^{-1/2}\left\|  K_{n}(x,\;\mathbf{.}\;)\right\|  _{2}%
^{-1}\;\left\|  K_{n}(x,\;\mathbf{.}\;)\right\|  _{E}\\
&  \rightarrow0\;\mathrm{as\;}n\rightarrow\infty\;\;\;\;\;\;\mathrm{by}%
\;\left(  \mathrm{K.3}\right)  ,
\end{align*}
i.e. $\left(  \mathrm{H.4}\right)  $ holds. In order to show $\left(
\mathrm{H.5}\right)  ,$ note that using $\left(  \ref{k2part}\right)  $ again
in combination with Fubini Theorem (which holds by $\left(  \mathrm{K.0}%
\right)  $) and the triangle inequality yield%
\begin{align*}
& \frac{n}{\kappa_{n}(x)}\left|  \sum_{r=1}^{k_{n}}\nu_{n,r}\kappa
_{n,r}(x)f_{n,r}-f(x)\right|  \\ &  =\frac{n}{\kappa_{n}(x)}\left|  \sum
_{r=1}^{k_{n}}k_{n}\int_{I_{n,r}\times I_{n,r}}K_{n}\left(  x,t\right)
f\left(  s\right)  \nu\left(  dt\right)  \nu\left(  ds\right)  -f(x)\right|
\\
&  \leq\frac{n}{\kappa_{n}(x)}\left|  \sum_{r=1}^{k_{n}}k_{n}\int
_{I_{n,r}\times I_{n,r}}K_{n}\left(  x,t\right)  \left(  f\left(  s\right)
-f(t)\right)  \nu\left(  dt\right)  \nu\left(  ds\right)  \right|  +\frac
{n}{\kappa_{n}(x)}\Psi_{n}\left(  x\right)  \\
&  \leq2nk_{n}^{-1/2}\left\|  K_{n}(x,\;\mathbf{.}\;)\right\|  _{2}%
^{-1}\left(  \Delta_{n}\sum_{r=1}^{k_{n}}\;\left|  \int_{I_{n,r}}K_{n}\left(
x,t\right)  \nu\left(  dt\right)  \right|  +\Psi_{n}\left(  x\right)  \right)
\\
&  \leq2nk_{n}^{-1/2}\left\|  K_{n}(x,\;\mathbf{.}\;)\right\|  _{2}%
^{-1}\left(  \Delta_{n}\left\|  K_{n}(x,\;\mathbf{.}\;)\right\|  _{1}+\Psi
_{n}\left(  x\right)  \right)  \\
&  \rightarrow0\;\mathrm{as\;}n\rightarrow\infty\;\;\;\;\;\;\mathrm{by}%
\;\left(  \mathrm{K.4}\right)  .
\end{align*}
Finally, we show that $\left(  \mathrm{H.6}\right)  $ holds. Since
$\max\left(  \left(  n\delta_{n}\right)  ^{2},\Delta_{n}\right)  =o\left(
nk_{n}^{-1}\Delta_{n}\right)  ,$ it follows that%
\begin{align*}
&  \sum_{r=1}^{k_{n}}|w_{n,r}(x)|\max\left(  \left(  n\delta_{n}\right)
^{2},\Delta_{n}\right)  \\
&  \leq2k_{n}^{-1/2}\left\|  K_{n}(x,\;\mathbf{.}\;)\right\|  _{2}^{-1}%
\sum_{r=1}^{k_{n}}k_{n}\left|  \int_{I_{n,r}}K_{n}(x,t)\nu\left(  dt\right)
\right|  o\left(  nk_{n}^{-1}\Delta_{n}\right)  \\
&  =o\left(  nk_{n}^{-1/2}\left\|  K_{n}(x,\;\mathbf{.}\;)\right\|  _{2}%
^{-1}\left\|  K_{n}(x,\;\mathbf{.}\;)\right\|  _{1}\Delta_{n}\right)
=o\left(  1\right)  \;\;\mathrm{by}\;\left(  \mathrm{K.4}\right)
\end{align*}
and, since, by $\left(  \mathrm{H}^{\prime}\mathrm{.1}\right)  ,$
$n\exp\left(  -mcnk_{n}^{-1}\right)  \rightarrow0,$
\begin{align*}
\sum_{r=1}^{k_{n}}|w_{n,r}(x)|nk_{n}^{-1}\exp\left(  -mcnk_{n}^{-1}\right)
&  \leq n\exp\left(  -mcnk_{n}^{-1}\right)  \max_{1\leq r\leq k_{n}}%
|w_{n,r}(x)|\\
&  =o\left(  \max_{1\leq r\leq k_{n}}|w_{n,r}(x)|\right)  =o\left(  1\right)
\;\;\mathrm{by}\;\left(  \mathrm{H.4}\right)  .
\end{align*}
b) For all $x\in F$, it is easy to see that
\begin{align*}
&  \frac{n}{\kappa_{n}(x)}\left|  \widehat{f}_{n}(x)-\widetilde{f}%
_{n}(x)\right|  \\
&  \leq4Mnk_{n}^{-1/2}\left\|  K_{n}(x,\;\mathbf{.}\;)\right\|  _{2}%
^{-1}\left(  \sum_{r=1}^{k_{n}}\;\int_{I_{n,r}}\left(  K_{n}(x,t)-K_{n}%
(x,x_{n,r})\right)  \nu\left(  dt\right)  \right)  \\
&  =o\left(  1\right)  \;\;\mathrm{by}\;\left(  \mathrm{K.5}\right)  ,
\end{align*}
which, combined with (a), give the intended result by standard
arguments.\newline c) is straightforward by Corollary~\ref{coromain}.
\end{proof}

\noindent Two illustrations of this result are now provided.  
See \textsc{Menneteau}~(2003b) for other applications.

\subsection{Parzen kernel estimates \label{kernel}}

In the following, we take $E=[0,1]^{d}$ $\left(  d\in\mathbb{N}^{\ast}\right)
,$ $\nu$ is the Lebesgue measure on $E$ and $\left\{  I_{n,r}:\;1\leq r\leq
k_{n}\right\}  $ an adjacent equidistant partition of $E$ such that
$I_{n,r}=\prod_{j=1}^{d}J_{n,r,j}$ where the $J_{n,r,j}$ are interval of
$[0,1]$ of length $k_{n}^{-1/d}$, leading to $\nu_{n,r}=k_{n}^{-1}$ for all
$1\leq r\leq k_{n}$. Besides, we denote by $x_{n,r}$ the center of the cell
$I_{n,r}$, $r=1,\dots,k_{n}$. The multivariate Parzen kernel estimate is then
defined by the kernel
\[
K_{n}\left(  x,t\right)  =\frac{1}{h_{n}^{d}}K^{PR}\left(  \frac{x-t}{h_{n}%
}\right)  ,
\]
where $K^{PR}:\mathbb{R}^{d}\rightarrow\mathbb{R}^{+}$ is a $1$-Lipschitzian
Parzen-Rosenblatt kernel with compact support $\mathcal{K}$, and $(h_{n})$ is
a sequence of positive real numbers tending to zero. It tunes the smoothing
introduced by the kernel. For a review on non-parametric regression, see
\textsc{H\"{a}rdle}~(1990). We suppose that $f$ is $\alpha$-Lipschitzian
($0<\alpha\leq1$), in particular,
\begin{equation}
\Delta_{n}=O\left(  k_{n}^{-\alpha/d}\right)  .\label{fcont}%
\end{equation}

\begin{corollary}
\label{coroPR}
Assume that (i) $n^{-1}k_{n}\log\left(  n\right)  \rightarrow0,$ (ii)
$h_{n}^{d}k_{n}\rightarrow\infty$ and$\;$(iii) $nk_{n}^{-1/2}h_{n}%
^{\alpha+d/2}\rightarrow0$ then for all $\left(  x_{1},...,x_{p}\right)
\subset\overset{\circ}{E}=\left(  0,1\right)  ^{d},$%
\begin{equation}
\left\{ v_n c\;(\widehat{f}_{n}(x_{j})-f(x_{j})):1\leq
j\leq p\right\}  \underset{\mathcal{D}}{\rightarrow}N\left(  0,\left\|
K^{PR}\right\|  _{2}^{2}I_{p}\right)  ,\label{ParKern}%
\end{equation}
where $I_{p}$ is the identity matrix of $\mathbb{R}^{p}$ and
$v_n=n h_n^{d/2} k_n^{-1/2}$. \\ 
The choice $h_{n}=n^{-\frac{1}{\alpha+d}}$ and
$k_{n}=n^{\frac{d}{\alpha+d}}u_{n}^{2}$ 
lead to $v_n=n^{\frac{\alpha}{\alpha+d}}u_{n}^{-1},$
where $u_{n}\rightarrow\infty$ arbitrary slowly. 
\end{corollary}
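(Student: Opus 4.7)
The plan is to check the hypotheses of Theorem~\ref{genKern}(a) in the Parzen setting and then rescale the conclusion to match the announced form. Fix $x\in\overset{\circ}{E}$; since $h_n\to 0$ and $K^{PR}$ has compact support $\mathcal{K}$, for $n$ large enough the map $t\mapsto h_n^{-d}K^{PR}((x-t)/h_n)$ is supported in $E$, and a change of variables produces the benchmark quantities $\|K_n(x,\cdot)\|_1=\|K^{PR}\|_1$, $\|K_n(x,\cdot)\|_2=h_n^{-d/2}\|K^{PR}\|_2$ and $\|K_n(x,\cdot)\|_E=h_n^{-d}\|K^{PR}\|_E$, in terms of which every condition in Theorem~\ref{genKern} is expressible. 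I will also record three elementary bounds: the cells $I_{n,r}$ have diameter $O(k_n^{-1/d})$, so the $1$-Lipschitz property of $K^{PR}$ gives $\Gamma_n(x)=O(h_n^{-d-1}k_n^{-1/d})$; the $\alpha$-Lipschitz property of $f$ combined with $\int K^{PR}=1$ gives $\Psi_n(x)=O(h_n^{\alpha})$; and (\ref{fcont}) gives $\Delta_n=O(k_n^{-\alpha/d})$.

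The verifications are then mostly routine. Assumption (i) is $(\mathrm{H}'.1)$, while $(\mathrm{K.0})$ is immediate because $K_n$ is bounded with compact support. Conditions $(\mathrm{K.1})$ and $(\mathrm{K.3})$ both reduce to $h_n^d k_n\to\infty$, which is~(ii). For $(\mathrm{H.2})$ I need $nk_n^{-1-\alpha/d}\to 0$; (ii) eventually gives $h_n\geq k_n^{-1/d}$, whence $h_n^{\alpha}\geq k_n^{-\alpha/d}$ and
\[
n k_n^{-1-\alpha/d}\;\leq\;\bigl(nk_n^{-1/2}h_n^{\alpha+d/2}\bigr)\bigl(h_n^d k_n\bigr)^{-1/2}\;\rightarrow\;0
\]
by (ii) and (iii). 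The same manipulation brings both contributions in $(\mathrm{K.4})$ down to a multiple of $nk_n^{-1/2}h_n^{\alpha+d/2}$, which vanishes by~(iii).

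The one non-trivial step is $(\mathrm{K.2})$, which determines the covariance structure. Change of variables gives
\[
\langle K_n(x_i,\cdot),K_n(x_j,\cdot)\rangle_2 \;=\; h_n^{-d}\int K^{PR}(u)\,K^{PR}\!\left(u+\frac{x_i-x_j}{h_n}\right)du.
\]
When $i=j$ this equals $h_n^{-d}\|K^{PR}\|_2^{2}$; when $i\neq j$ the translation $(x_i-x_j)/h_n$ eventually exceeds the diameter of $\mathcal{K}$, so the two factors have disjoint supports and the integral is identically zero for all $n$ large enough. Dividing by $\|K_n(x_i,\cdot)\|_2\|K_n(x_j,\cdot)\|_2 = h_n^{-d}\|K^{PR}\|_2^{2}$ yields $\sigma(x_i,x_j)=\delta_{ij}$. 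Keeping careful track of the compact support is the main, if modest, obstacle of the proof.

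Theorem~\ref{genKern}(a) then delivers the joint convergence of $\{nck_n^{-1/2}\|K_n(x_j,\cdot)\|_2^{-1}(\widehat f_n(x_j)-f(x_j))\}_{j=1}^{p}$ to $N(0,I_p)$; multiplying through by the deterministic scalar $\|K^{PR}\|_2$ turns the prefactor into $v_n c$ and rescales the covariance to $\|K^{PR}\|_2^{2}\,I_p$, which is (\ref{ParKern}). For the explicit choice $h_n=n^{-1/(\alpha+d)}$ and $k_n=n^{d/(\alpha+d)}u_n^{2}$, direct computation gives $h_n^d k_n=u_n^{2}\to\infty$, $nk_n^{-1/2}h_n^{\alpha+d/2}=u_n^{-1}\to 0$, and $v_n=nh_n^{d/2}k_n^{-1/2}=n^{\alpha/(\alpha+d)}u_n^{-1}$; condition (i) reduces to $u_n^{2}\log n=o(n^{\alpha/(\alpha+d)})$, so $u_n$ is only required to diverge sufficiently slowly.
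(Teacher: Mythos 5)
Your proof is correct and follows essentially the same route as the paper: reduce to Theorem~\ref{genKern}(a) by computing $\|K_n(x,\cdot)\|_1$, $\|K_n(x,\cdot)\|_2$, $\|K_n(x,\cdot)\|_E$, $\Gamma_n$, $\Psi_n$, $\Delta_n$ under the Parzen scaling, check $(\mathrm{H}'.1)$, $(\mathrm{H.2})$, $(\mathrm{K.0})$--$(\mathrm{K.4})$ from (i)--(iii), get $\sigma(x_i,x_j)=\delta_{ij}$ from the eventual disjointness of supports, and rescale by the constant $\|K^{PR}\|_2$. The only cosmetic difference is in $(\mathrm{H.2})$: the paper uses the exact identity $nk_n^{-1-\alpha/d}=\bigl(nk_n^{-1/2}h_n^{\alpha+d/2}\bigr)\bigl(h_n^d k_n\bigr)^{-(\alpha/d+1/2)}$, whereas you first invoke $h_n\ge k_n^{-1/d}$ (from (ii)) to replace $k_n^{-\alpha/d}$ by $h_n^\alpha$ and then factor as $\bigl(nk_n^{-1/2}h_n^{\alpha+d/2}\bigr)\bigl(h_n^d k_n\bigr)^{-1/2}$; both are valid and give the same conclusion.
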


\begin{proof}
$\left(  \mathrm{K.0}\right)  $ holds trivialy and assumption (i) gives
$\left(  \mathrm{H}^{\prime}\mathrm{.1}\right)  .\;$To show $\left(
\mathrm{H.2}\right)  $, note that $\left(  \ref{fcont}\right)$ entails
\begin{equation}
nk_{n}^{-1}\Delta_{n}=O\left(  nk_{n}^{-\left(  1+\alpha/d\right)  }\right)
\label{Lip}%
\end{equation}
and thus,$\;$by\textrm{\ }$\left(  ii\right)  $\ and\ $\left(  iii\right)  ,$%
\[
nk_{n}^{-\left(  1+\alpha/d\right)  }=\left(  nk_{n}^{-1/2}h_{n}^{\alpha
+d/2}\right)  \left(  h_{n}^{d}k_{n}\right)  ^{-\left(  \frac{\alpha}{d}%
+\frac{1}{2}\right)  }=o\left(  1\right)  .
\]
Let us consider now $\left(  \mathrm{K.1}\right)  -\left(  \mathrm{K.4}%
\right)  $. To this aim, set $x\in\overset{\circ}{E}.$ For large enough $n$
(i.e. such that $\mathcal{K}\subset h_{n}^{-1}\left(  x-E\right)  $)$,$%
\begin{equation}
\left\|  K_{n}(x,\;\mathbf{.}\;)\right\|  _{1}=\int_{h_{n}^{-1}\left(
x-E\right)  }K^{PR}(u)du=1.\label{L1Par}%
\end{equation}%
\begin{equation}
\left\|  K_{n}(x,\;\mathbf{.}\;)\right\|  _{2}=h_{n}^{-\frac{d}{2}}\left(
\int_{h_{n}^{-1}\left(  x-E\right)  }\left(  K^{PR}\right)  ^{2}(u)du\right)
^{1/2}=h_{n}^{-\frac{d}{2}}\left\|  K^{PR}\right\|  _{2}.\label{L2Par}%
\end{equation}%
\begin{align}
\Psi_{n}\left(  x\right)   &  =h_{n}^{-d}\left|  \int_{E}K^{PR}\left(
h_{n}^{-1}\left(  x-t\right)  \right)  \left(  f\left(  t\right)  -f\left(
x\right)  \right)  dt\right|  \nonumber\\
&  =\left|  \int_{\mathcal{K}}K^{PR}\left(  u\right)  \left(  f\left(
x-h_{n}u\right)  -f\left(  x\right)  \right)  du\right|  \nonumber\\
&  =O\left(  h_{n}^{\alpha}\right)  .\label{ParEst}%
\end{align}
Moreover, since $K^{PR}$ is $1$-Lipschitzian$,$%
\begin{equation}
\Gamma_{n}\left(  x\right)  =O\left(  k_{n}^{-1/d}h_{n}^{-\left(  d+1\right)
}\right)  .\label{ModPar}%
\end{equation}
To check $\left(  \mathrm{K.1}\right)  $, take $\left(  x_{1},x_{2}\right)
\in F\times F$, then $\left(  \ref{L1Par}\right)  ,\left(  \ref{L2Par}\right)
,\left(  \ref{ModPar}\right)  $ and $\left(  ii\right)  $ entail
\begin{align*}
\Gamma_{n}\left(  x_{1}\right)  \left\|  K_{n}(x_{2},\;\mathbf{.}\;)\right\|
_{1} &  =O\left(  k_{n}^{-1/d}h_{n}^{-\left(  d+1\right)  }\right)  =\left(
k_{n}h_{n}^{d}\right)  ^{-1/d}O\left(  h_{n}^{-d}\right)  \\
&  =o\left(  \left\|  K_{n}(x_{1},\;\mathbf{.}\;)\right\|  _{2}\left\|
K_{n}(x_{2},\;\mathbf{.}\;)\right\|  _{2}\right)  ,
\end{align*}
$\left(  \mathrm{K.2}\right)  $ follows from the fact that for $x_{1}\neq
x_{2},$ we eventually have,%
\[
\left\langle K_{n}(x_{1},\;\mathbf{.}\;),K_{n}(x_{2},\;\mathbf{.}%
\;)\right\rangle _{2}=h_{n}^{-d}\int_{h_{n}^{-1}\left(  x_{1}-E\right)
}K^{PR}\left(  u\right)  K^{PR}\left(  u+h_{n}^{-1}\left(  x_{1}-x_{2}\right)
\right)  du=0.
\]
For $\left(  \mathrm{K.3}\right)$, note that, for all $x\in F$,
\begin{align*}
k_{n}^{-1/2}\left\|  K_{n}(x,\;\mathbf{.}\;)\right\|  _{2}^{-1}\;\left\|
K_{n}(x,\;\mathbf{.}\;)\right\|  _{E} &  =k_{n}^{-1/2}h_{n}^{d/2}\left\|
K^{PR}\right\|  _{2}^{-1}h_{n}^{-d}\left\|  K^{PR}\right\|  _{E}
 =o\left(  1\right) 
\end{align*}
with $\left(  ii\right)$.
Finally, for all $x\in F$, $\left(  \ref{L1Par}\right)  -\left(
\ref{ParEst}\right)  ,$ (ii) and (iii) entail%
\begin{align*}
nk_{n}^{-1/2}\left\|  K_{n}(x,\;\mathbf{.}\;)\right\|  _{2}^{-1}\;\max\left(
\Psi_{n}\left(  x\right)  ;\;\Delta_{n}\left\|  K_{n}(x,\;\mathbf{.}%
\;)\right\|  _{1}\right)   &  =O\left(  nk_{n}^{-1/2}h_{n}^{d/2}\left(
h_{n}^{\alpha}+k_{n}^{-\alpha/d}\right)  \right)  \\
&  =O\left(  nk_{n}^{-1/2}h_{n}^{\frac{d}{2}+\alpha}\right)  =o\left(
1\right)  .
\end{align*}
The end of the proof is straightforward.
\end{proof}

\noindent From the asymptotical point of view, 
$\widehat{f}_{n}$ is better than
$\widetilde{f}_{n}$ and than the estimator based on Parzen kernel proposed, in
the unidimensional case, by \textsc{Girard} \& \textsc{Jacob}~(2001).
When $f$ is $\alpha$-Lipschitzian,
the speed of convergence of $\widehat{f}_{n}$
can be chosen arbitrarily close to the minimax speed
$n^{-\frac{\alpha}{\alpha+d}}$ 
(see \textsc{H\"ardle} \textit{et al}~(1995b)).
Let us also note that the regularity of $\widehat{f}_{n}$ and 
$\widetilde{f}_{n}$ is determined by the choice of the Parzen-Rosenblatt
kernel.  

\subsection{Projection estimates: Dirichlet kernels}

In the sequel $(b_{n})$ is a sequence of integers tending to infinity. Let
$(e_{j})_{j\in\mathbb{N}}$ be an orthonormal basis of $L^{2}\left(
E,\nu\right)  ${.} The expansion of $f$ on this basis truncated to the $b_{n}$
first terms is noted
\[
f_{n}(x)=\sum_{j=0}^{b_{n}}a_{j}e_{j}(x),\qquad x\in E.
\]
Each $a_{j}=\int_{E}e_{j}(t)f(t)\nu\left(  dt\right)  $ is then estimated by
\[
\widehat{a}_{j,k_{n}}=\sum_{r=1}^{k_{n}}\left(  \int_{I_{n,r}}e_{j}%
(t)\nu\left(  dt\right)  \right)  (1+N_{n,r}^{-1})Y_{n,r}^{\ast},\qquad1\leq
j\leq b_{n},
\]
leading to an estimate $\widehat{f}_{n}(x)$ of $f_{n}(x)$ via:
\begin{equation}
\widehat{f}_{n}(x)=\sum_{\ell=0}^{b_{n}}\widehat{a}_{j,k_{n}}e_{\ell}%
(x)=\sum_{r=1}^{k_{n}}\left(  \int_{I_{n,r}}K_{n}^{D}(x,t)\nu\left(
dt\right)  \right)  (1+N_{n,r}^{-1})Y_{n,r}^{\ast}\label{defbasebis}%
\end{equation}
where $K_{n}^{D}$ the Dirichlet's kernel associated to the orthonormal basis
$(e_{j})_{j\in\mathbb{N}}$ defined by
\begin{equation}
K_{n}^{D}\left(  x,t\right)  =\sum_{j=0}^{b_{n}}e_{j}(x)e_{j}(t),\qquad
(x,t)\in E^{2}.\label{Dkern}%
\end{equation}
It appears that (\ref{defbasebis}) is a particular case of (\ref{IntEst}) with
$K_{n}=K_{n}^{D}.$ Of course, the sometimes more easy to handle estimates
\begin{equation}
\widetilde{f}_{n}(x)=\sum_{r=1}^{k_{n}}\nu_{n,r}K_{n}^{D}(x,x_{n,r}%
)(1+N_{n,r}^{-1})Y_{n,r}^{\ast},\label{defbase}%
\end{equation}
can also be defined$.$ Below, we focus on the trigonometric basis on
$E=[0,1]$, $\nu$ is the Lebesgue measure on $E$, $\{I_{n,r}:\;1\leq r\leq
k_{n}\}$ is the equidistant partition of $E$ and then $\nu_{n,r}=1/k_{n}$ for
all $1\leq r\leq k_{n}$. This basis is 
defined for $x\in [0,1]$ by
\[
e_{0}(x)=1,\qquad e_{2k-1}(x)=\sqrt{2}\cos{(2k\pi x)},\qquad e_{2k}%
(x)=\sqrt{2}\sin{(2k\pi x)},\quad k\geq1.
\]
It is easily seen in that case that the Dirichlet kernel is
\begin{align*}
K_{n}^{D}(x,t) &  =\frac{\sin\left(  \left(  1+b_{n}\right)  \pi\left(
x-t\right)  \right)  }{\sin\left(  \pi\left(  x-t\right)  \right)
}\;\;\;\;\mathrm{for}\;x\neq t\\
&  =1+b_{n}\;\;\;\;\mathrm{if}\;x=t.
\end{align*}
In the following, we assume that $f$ is $C^{2}$. In particular,
\begin{equation}
\Delta_{n}=O\left(  k_{n}^{-1}\right)  .\label{fcontTri}%
\end{equation}
Besides, we introduce the boundary conditions $f(0)=f(1)$ and $f'(0)=f'(1)$.  

\begin{corollary}
Assume that (i) $n^{-1}k_{n}\log\left(  n\right)  \rightarrow0,\;$(ii)
$n^{-1}k_{n}^{2}\rightarrow\infty,$ (iii) $\left(  b_{n}\log\left(
b_{n}\right)  \right)  ^{-1}k_{n}\rightarrow\infty$ and$\;$(iv) $nk_{n}%
^{-1/2}b_{n}^{-3/2}\rightarrow0.\;$Then, for all $\left(  x_{1},...,x_{p}%
\right)  \subset\left[  0,1\right]  ,$%
\begin{equation}
\left\{  v_n c\;(\widehat{f}_{n}%
(x_{j})-f(x_{j})):1\leq j\leq p\right\}  \underset{\mathcal{D}}{\rightarrow
}N\left(  0,I_{p}\right)  , \label{TriKern}%
\end{equation}
where $v_n=n(b_{n}k_{n})^{-1/2}$.  
The choice $b_{n}=n^{\frac{1}{2}}$ and
$k_{n}=n^{\frac{1}{2}}\log\left(  n\right)  u_{n}^{2}$ 
leads to $v_n= n^{\frac{1}{2}}
\log\left(  n\right)  ^{-1/2}u_{n}^{-1}$,
where $u_{n} \rightarrow\infty$ arbitrarily slowly.  
\end{corollary}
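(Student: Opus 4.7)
The plan is to verify assumptions $(\mathrm{H}'.1)$, $(\mathrm{H}.2)$, $(\mathrm{K}.0)$--$(\mathrm{K}.4)$ of Theorem~\ref{genKern}~(a) for the Dirichlet kernel $K_n^D$, which will give the CLT with asymptotic covariance $\Sigma_{(x_1,\ldots,x_p)}=I_p$ and normalization equivalent to $v_n c$. The backbone of the proof is a short dictionary of standard Fourier-analytic estimates for $K_n^D$ in the trigonometric basis of $L^2([0,1])$, which I will then substitute into each of the six conditions.

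I first collect the required estimates: (a) by Parseval and the reproducing property, $\|K_n^D(x,\cdot)\|_2^2 = K_n^D(x,x) = 1+b_n$; (b) the sup norm is $\|K_n^D(x,\cdot)\|_E = 1+b_n$; (c) the Lebesgue constant is $\|K_n^D(x,\cdot)\|_1 = O(\log b_n)$; (d) from the derivative bound $|\partial_t K_n^D(x,t)| = O(b_n^2)$ on cells of width $k_n^{-1}$ one has $\Gamma_n(x) = O(b_n^2/k_n)$; (e) the truncation error is $\Psi_n(x) = |f_n(x) - f(x)| = O(b_n^{-1})$ uniformly on $[0,1]$, where this bound uses the boundary conditions $f(0)=f(1)$, $f'(0)=f'(1)$ together with $f\in C^2$ to secure $|a_k| = O(k^{-2})$, hence $\sum_{k > b_n}|a_k| = O(b_n^{-1})$.

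With these identities the verifications are short. $(\mathrm{H}'.1)$ is exactly (i); $(\mathrm{H}.2)$ follows from (ii) since $\Delta_n = O(k_n^{-1})$ by (\ref{fcontTri}). $(\mathrm{K}.0)$ is clear. $(\mathrm{K}.1)$: $\Gamma_n(x_1)\|K_n^D(x_2,\cdot)\|_1 = O(b_n^2\log(b_n)/k_n) = o(b_n)$ by (iii), which matches $\|K_n^D(x_1,\cdot)\|_2\|K_n^D(x_2,\cdot)\|_2 \sim b_n$. $(\mathrm{K}.3)$ reduces to $(b_n/k_n)^{1/2} = o(1)$, again (iii). For $(\mathrm{K}.2)$, the reproducing property yields $\langle K_n^D(x_1,\cdot), K_n^D(x_2,\cdot)\rangle_2 = K_n^D(x_1,x_2)$; for $x_1=x_2$ the normalized inner product equals $1$, while for $x_1 \ne x_2$ it is bounded by $(b_n|\sin\pi(x_1-x_2)|)^{-1}\to 0$, so $\Sigma_{(x_1,\ldots,x_p)} = I_p$.

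The only slightly delicate step is $(\mathrm{K}.4)$, which reduces to
\[
nk_n^{-1/2}b_n^{-1/2}\max\!\bigl(\Psi_n(x),\;\Delta_n\|K_n^D(x,\cdot)\|_1\bigr) = o(1).
\]
The term involving $\Psi_n(x)$ is $O(nk_n^{-1/2}b_n^{-3/2})$ and vanishes by (iv); the term involving $\Delta_n\|K_n^D(x,\cdot)\|_1$ is $O(nk_n^{-3/2}b_n^{-1/2}\log b_n)$, which I split as $(nk_n^{-1/2}b_n^{-3/2})\cdot(b_n\log(b_n)/k_n)$, the first factor vanishing by (iv) and the second by (iii). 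The main obstacle, and the real reason the periodic boundary data enter essentially, is step (e): without the boundary conditions the Fourier tail would only be $O(\log(b_n)/b_n)$ at best, and still worse near $0$ or $1$, and the $\Psi_n$-piece of $(\mathrm{K}.4)$ would fail at the stated rate. Plugging in the explicit choice $b_n=n^{1/2}$, $k_n=n^{1/2}\log(n) u_n^2$ is then a direct check of (i)--(iv), producing $v_n = n(b_n k_n)^{-1/2} = n^{1/2}\log(n)^{-1/2}u_n^{-1}$.
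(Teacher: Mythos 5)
Your proof is correct and follows essentially the same route as the paper: you establish the standard Dirichlet-kernel estimates (\ref{TriNorm})--(\ref{ModTri}) and then substitute them into $(\mathrm{H}'.1)$, $(\mathrm{H.2})$, $(\mathrm{K.0})$--$(\mathrm{K.4})$ of Theorem~\ref{genKern}~(a); the only difference is that you spell out the verification of $(\mathrm{K.1})$--$(\mathrm{K.4})$ explicitly, whereas the paper compresses this into the phrase ``the proof being similar to the one of Corollary~\ref{coroPR}.''
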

\begin{proof}
$\left(  \mathrm{K.0}\right)  $ holds trivialy. Assumptions (i) and (ii)
give $\left(  \mathrm{H}^{\prime}\mathrm{.1}\right)  \;$and $\left(
\mathrm{H.2}\right)  $. The following facts are well known (see e.g.
\textsc{Tolstov}~(1962))%
\begin{equation}
\left\|  K_{n}^{D}(x,\;\mathbf{.}\;)\right\|  _{E}=1+b_{n},\;\;\;\left\|
K_{n}^{D}(x,\;\mathbf{.}\;)\right\|  _{2}=\left(  1+b_{n}\right)
^{1/2},\;\;\;\left\|  K_{n}^{D}(x,\;\mathbf{.}\;)\right\|  _{1}=O\left(
\log\left(  b_{n}\right)  \right)  ,\label{TriNorm}%
\end{equation}%
\begin{equation}
\left\langle K_{n}(x_{1},\;\mathbf{.}\;),K_{n}(x_{2},\;\mathbf{.}%
\;)\right\rangle _{2}=K_{n}(x_{1},x_{2})=o\left(  b_{n}\right)  \;\mathrm{for}%
\;x_{1}\neq x_{2}.\label{TriPs}%
\end{equation}
Since $f$ is $C^{2}$, and taking into account of $f(0)=f(1)$ and $f'(0)=f'(1)$,
a double integration by parts yields,
\[
\underset{j\geq0}{\max}\;j^{2}\int_{0}^{1}f\left(  t\right)  e_{j}\left(
t\right)  dt=O\left(  1\right)  .
\]
Hence,
\begin{align}
\Psi_{n}\left(  x\right)     =\left|  \sum_{j>b_{n}}\int_{0}^{1}f\left(
t\right)  e_{j}\left(  t\right)  dt\;e_{j}\left(  x\right)  \right|
=O\left(  \sum_{j>b_{n}}j^{-2}\right)  =O\left(  b_{n}^{-1}\right)
.\label{TriEst}%
\end{align}
Moreover, since
$
\underset{j\geq1}{\max}\;j^{-1}\left\|  e_{j}^{\prime}\right\|  _{E}=O\left(
1\right)  ,
$
the Taylor formula gives%
\begin{align}
\Gamma_{n}\left(  x\right)   &  \leq\sum_{j=0}^{b_{n}}\left|  e_{j}(x)\right|
\sup\left\{  e_{j}(t)-e_{j}(s):\left(  s,t\right)  \in I_{n,r}\times
I_{n,r}\right\}  \nonumber\\
&  =O\left(  k_{n}^{-1}\sum_{j=0}^{b_{n}}j\right)  =O\left(  k_{n}^{-1}%
b_{n}^{2}\right)  .\label{ModTri}%
\end{align}
Finally, $\left(\ref{TriNorm}\right)  -\left(  \ref{ModTri}\right)$
together with (i)-(iv) imply $\left(  \mathrm{K.1}\right)  -\left(  \mathrm{K.4}\right)$, the proof being similar to the one of Corollary~\ref{coroPR}.
\end{proof}

\noindent In this situation, both estimates $\widehat{f}_{n}$ 
and $\widetilde{f}_{n}$ are $C^\infty$. From the asymptotical point of view, 
$\widehat{f}_{n}$ is better than
$\widetilde{f}_{n}$ and than the estimator based on projections proposed
by \textsc{Girard} \& \textsc{Jacob}~(2003b).
Nevertheless, when $f$ is $C^2$,
the above estimates are suboptimal, since the minimax speed of convergence
is $n^{-2/3}$ (see e.g. \textsc{Hall} \textit{et al}~(1998)).
The use of  expansions on wavelet bases should lead to a better speed of
convergence. This is part of our future work.
\newline We refer to \textsc{Girard} \& \textsc{Menneteau}~(2002)
for a brief comparison on simulations of some of the previous
estimates. Let us also emphasize that in such finite sample situations,
the quality of the estimation strongly depends on the choice of the
hyper-parameters. The estimates of type (b) described in introduction and more
generally the estimates (\ref{defest}) require the choice of two
hyper-parameters: the number of extreme values ($k_{n}$) and a smoothing
parameter ($b_{n}$ or $h_{n}$). Similarly, the estimates of type (a) usually
require to select two hyper-parameters: the rate of decrease of the density
towards 0 (noted $\beta$ in the introduction) and the number of continuous
derivatives of $f$ (noted $q$ in the introduction). In our opinion, one of the
main problems in both cases is now to define an adaptive method for choosing
the hyper-parameters.

\section{Appendix}

We provide a general theorem about the central limit property of
a sequence of random $\mathbb{R}^{p}$ valued vectors
\[
\theta_{n}=\sum_{r=1}^{k_{n}}w_{n,r}\zeta_{n,r},\;\;\;\;\;n\geq1,
\]
where $\left(  w_{n,r}\right)  _{1\leq r\leq k_{n}}$ $\subset\mathbb{R}^{p}$
and $\left(  \zeta_{n,r}\right)  _{1\leq r\leq k_{n}}$ are random variables
such that: \medskip\newline $\left(  \mathrm{A.1}\right)  \;\left(
\zeta_{n,r}\right)  _{1\leq r\leq k_{n}}$ are centered and independent random
variables.\medskip\newline $\left(  \mathrm{A.2}\right)  \;\underset{1\leq
r\leq k_{n}}{\max}\left|  \mathbb{E}\left(  \zeta_{n,r}^{2}\right)  -1\right|
\rightarrow0.$\medskip\newline $\left(  \mathrm{A.3}\right)  \;$ There exists
a covariance matrix $\Sigma$ in $\mathbb{R}^{p}$ such that for all $\lambda
\in\mathbb{R}^{p},$
\[
\sum_{r=1}^{k_{n}}\left\langle w_{n,r},\lambda\right\rangle _{\mathbb{R}^{p}%
}^{2}\rightarrow\;^{t}\lambda\Sigma\lambda.
\]
\medskip$\left(  \mathrm{A.4}\right)  \;\underset{1\leq r\leq k_{n}}{\max
}\left\|  w_{n,r}\right\|  _{\mathbb{R}^{p}}=o\left(  1\right)  .$
\medskip\newline $\left(  \mathrm{A.5}\right)  \;\underset{\alpha
\rightarrow\infty}{\lim\sup\;}\underset{n\rightarrow\infty}{\lim\sup
\;}\underset{1\leq r\leq k_{n}}{\max}\mathbb{E}\left(  \zeta_{n,r}%
^{2}\mathbf{1}_{\left\{  \left|  \zeta_{n,r}\right|  >\alpha\right\}
}\right)  =0.$

\begin{theorem}
\label{GenTCL}Under assumptions $\left(  \mathrm{A.1}\right)  -\left(
\mathrm{A.5}\right)  $, $\theta_{n}\underset{\mathcal{D}}{\rightarrow}N\left(
0,\Sigma\right)  . $
\end{theorem}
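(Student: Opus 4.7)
The plan is to reduce the multivariate statement to a one-dimensional central limit theorem via the Cram\'er--Wold device, and then verify the Lindeberg condition for the resulting triangular array of independent scalar summands.

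Fix $\lambda\in\mathbb{R}^{p}$ and set $a_{n,r}=\langle w_{n,r},\lambda\rangle_{\mathbb{R}^{p}}$ and $S_{n}=\langle\lambda,\theta_{n}\rangle_{\mathbb{R}^{p}}=\sum_{r=1}^{k_{n}}a_{n,r}\zeta_{n,r}$. By $\left(\mathrm{A.1}\right)$, $S_{n}$ is a sum of centered independent random variables. I would first compute its variance: using $\left(\mathrm{A.2}\right)$ and $\left(\mathrm{A.3}\right)$,
\[
s_{n}^{2}:=\sum_{r=1}^{k_{n}}a_{n,r}^{2}\mathbb{E}\left(\zeta_{n,r}^{2}\right)=\sum_{r=1}^{k_{n}}a_{n,r}^{2}+\sum_{r=1}^{k_{n}}a_{n,r}^{2}\left(\mathbb{E}\left(\zeta_{n,r}^{2}\right)-1\right)\longrightarrow\,^{t}\lambda\Sigma\lambda,
\]
since the second term is bounded in absolute value by $\max_{r}|\mathbb{E}(\zeta_{n,r}^{2})-1|\sum_{r}a_{n,r}^{2}$ and the sum is bounded by $\left(\mathrm{A.3}\right)$. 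If $^{t}\lambda\Sigma\lambda=0$ the convergence of $S_{n}$ to $0$ in probability is immediate by Chebyshev, so assume $^{t}\lambda\Sigma\lambda>0$.

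The main step is the Lindeberg condition: for every $\varepsilon>0$,
\[
L_{n}(\varepsilon):=\frac{1}{s_{n}^{2}}\sum_{r=1}^{k_{n}}\mathbb{E}\left(a_{n,r}^{2}\zeta_{n,r}^{2}\mathbf{1}_{\left\{|a_{n,r}\zeta_{n,r}|>\varepsilon\right\}}\right)\longrightarrow 0.
\]
Since $|a_{n,r}|\leq\|\lambda\|_{\mathbb{R}^{p}}\|w_{n,r}\|_{\mathbb{R}^{p}}$, assumption $\left(\mathrm{A.4}\right)$ gives $A_{n}:=\max_{1\leq r\leq k_{n}}|a_{n,r}|\to 0$. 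Hence, for any fixed $\alpha>0$ and $n$ large enough so that $A_{n}\alpha<\varepsilon$, the event $\{|a_{n,r}\zeta_{n,r}|>\varepsilon\}$ is contained in $\{|\zeta_{n,r}|>\alpha\}$ for every $r$, and therefore
\[
L_{n}(\varepsilon)\leq\frac{1}{s_{n}^{2}}\left(\sum_{r=1}^{k_{n}}a_{n,r}^{2}\right)\max_{1\leq r\leq k_{n}}\mathbb{E}\left(\zeta_{n,r}^{2}\mathbf{1}_{\left\{|\zeta_{n,r}|>\alpha\right\}}\right).
\]
The first factor is bounded (it converges to $1$), so letting first $n\to\infty$ and then $\alpha\to\infty$ and invoking $\left(\mathrm{A.5}\right)$ yields $L_{n}(\varepsilon)\to 0$.

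Once Lindeberg is established, the Lindeberg--Feller theorem gives $S_{n}/s_{n}\Rightarrow N(0,1)$, and hence $S_{n}\Rightarrow N(0,\,^{t}\lambda\Sigma\lambda)$ since $s_{n}^{2}\to\,^{t}\lambda\Sigma\lambda$. As $\lambda\in\mathbb{R}^{p}$ was arbitrary, the Cram\'er--Wold device concludes $\theta_{n}\underset{\mathcal{D}}{\rightarrow}N(0,\Sigma)$. The only delicate step is the Lindeberg verification, which is handled cleanly by the interplay between $\left(\mathrm{A.4}\right)$ (uniform smallness of the weights) and $\left(\mathrm{A.5}\right)$ (uniform integrability of the squared summands); the rest is bookkeeping.
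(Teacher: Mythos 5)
Your proposal is correct and takes essentially the same route as the paper: Cram\'er--Wold reduction followed by the Lindeberg condition, with $\left(\mathrm{A.4}\right)$ used to trade the scale-dependent truncation event $\{|a_{n,r}\zeta_{n,r}|>\varepsilon\}$ for the uniform one $\{|\zeta_{n,r}|>\alpha\}$ and $\left(\mathrm{A.5}\right)$ killing the resulting bound as $\alpha\to\infty$. You spell out two details the paper leaves implicit --- the convergence $s_n^2\to{}^t\lambda\Sigma\lambda$ via $\left(\mathrm{A.2}\right)$--$\left(\mathrm{A.3}\right)$ and the degenerate case ${}^t\lambda\Sigma\lambda=0$ --- but the core argument is the same.
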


\begin{proof}
We have to show that, for all $\lambda\in\mathbb{R}^{p},$%
\begin{equation}
\left\langle \theta_{n},\lambda\right\rangle _{\mathbb{R}^{p}}\underset
{\mathcal{D}}{\rightarrow}N\left(  0,^{t}\lambda\Sigma\lambda\right)  .
\label{gentcl1}%
\end{equation}
Now, by Lindeberg Theorem (see e.g : \textsc{Dudley}~(1989) p. 248), it is
easy to see that $\left(  \ref{gentcl1}\right)  $ holds whenever for all
$\varepsilon>0,$%
\begin{equation}
\underset{n\rightarrow\infty}{\lim\sup\;}\sum_{r=1}^{k_{n}}\left\langle
w_{n,r},\lambda\right\rangle _{\mathbb{R}^{p}}^{2}\mathbb{E}\left(
\zeta_{n,r}^{2}\mathbf{1}_{\left\{  \left|  \left\langle w_{n,r}%
,\lambda\right\rangle _{\mathbb{R}^{p}}\zeta_{n,r}\right|  >\varepsilon
\right\}  }\right)  =0. \label{gentcl2}%
\end{equation}
Fix $\lambda\in\mathbb{R}^{p},$ $\varepsilon>0$ and $\alpha>0.$ Using $\left(
\mathrm{A.4}\right)  ,$ we get for all $n$ large enough and all $1\leq r\leq
k_{n}$ that
$
\mathbf{1}_{\left\{  \left|  \left\langle w_{n,r},\lambda\right\rangle
_{\mathbb{R}^{p}}\zeta_{n,r}\right|  >\varepsilon\right\}  }\leq
\mathbf{1}_{\left\{  \left|  \zeta_{n,r}\right|  >\alpha\right\}  }.
$
Hence,
\begin{align*}
&  \underset{n\rightarrow\infty}{\lim\sup\;}\sum_{r=1}^{k_{n}}\left\langle
w_{n,r},\lambda\right\rangle _{\mathbb{R}^{p}}^{2}\mathbb{E}\left(
\zeta_{n,r}^{2}\mathbf{1}_{\left\{  \left|  \left\langle w_{n,r}%
,\lambda\right\rangle _{\mathbb{R}^{p}}\zeta_{n,r}\right|  >\varepsilon
\right\}  }\right) \\
&  \leq\underset{n\rightarrow\infty}{\lim\sup\;}\left(  \sum_{r=1}^{k_{n}%
}\left\langle w_{n,r},\lambda\right\rangle _{\mathbb{R}^{p}}^{2}\right)
\underset{1\leq r\leq k_{n}}{\max}\mathbb{E}\left(  \zeta_{n,r}^{2}%
\mathbf{1}_{\left\{  \left|  \zeta_{n,r}\right|  >\alpha\right\}  }\right) \\
&  \leq^{t}\lambda\Sigma\lambda\;\underset{n\rightarrow\infty}{\lim\sup
\;}\underset{1\leq r\leq k_{n}}{\max}\mathbb{E}\left(  \zeta_{n,r}%
^{2}\mathbf{1}_{\left\{  \left|  \zeta_{n,r}\right|  >\alpha\right\}
}\right)  .
\end{align*}
and we get the result by $\left(  \mathrm{A.5}\right)  $ when $\alpha
\uparrow\infty.$
\end{proof}

\section*{References}

\begin{description}
\item { Baufays, P. and Rasson, J.P.} (1985) { A new geometric discriminant
rule.} \emph{Computational Statistics Quarterly}, \textbf{2}, 15--30.

\item  Dembo, A. and Zeitouni O. (1991) \emph{Large deviations techniques and
applications},\textit{\ }Jones and Bartlett\textit{.}

\item { Deprins, D., Simar, L. and Tulkens, H.} (1984) { Measuring Labor
Efficiency in Post Offices.} in \emph{The Performance of Public Enterprises:
Concepts and Measurements} by M. Marchand, P. Pestieau and H. Tulkens, North
Holland ed, Amsterdam.

\item { Devroye, L.P. and Wise, G.L.} (1980) { Detection of abnormal behavior
via non parametric estimation of the support.} \emph{SIAM J. Applied Math.},
\textbf{38}, 448--480.

\item  {Dudley, R.M.} (1989) \emph{Real Analysis and Probability}, Wadsworth
and Brooks / Cole.

\item  {Gardes, L.} (2002) { Estimating the support of a Poisson process via
the {F}aber-{S}hauder basis and extreme values.} \emph{Publications de
l'Institut de Statistique de l'Universit\'{e} de Paris}, \textbf{XXXXVI}, 43--72.

\item { Geffroy, J.} (1964) { Sur un probl\`{e}me d'estimation g\'{e}%
om\'{e}trique.} \emph{Publications de l'Institut de Statistique de
l'Universit\'{e} de Paris}, \textbf{XIII}, 191--200.

\item  {Gijbels, I. and Peng, L.} (1999) { Estimation of a support curve via
order statistics.} \emph{Discussion Paper} \textbf{9905}, Institut de
Statistique, Universit\'{e} Catholique de Louvain.

\item { Gijbels, I., Mammen, E., Park, B.U. and Simar, L.} (1999) {On
estimation of monotone and concave frontier functions.} \emph{Journal of the
American Statistical Association}, \textbf{94}, 220--228.

\item { Girard, S. and Jacob, P.} (2003a) { Extreme values and Haar series
estimates of point processes boundaries.} \emph{Scandinavian Journal of
Statistics}, \textbf {30}(2), 369--384.

\item { Girard, S. and Jacob, P.} (2003b) { Projection estimates of point
processes boundaries.} \emph{Journal of Statistical Planning and Inference},
\textbf{116}(1), 1--15.

\item { Girard, S. and Jacob, P.} (2001) { Extreme values and kernel estimates
of point processes boundaries.} \emph{Technical report ENSAM-INRA-UM2},
\textbf{01-02}.

\item { Girard, S. and Menneteau, L.} (2002) { Limit theorems for extreme
values estimates of point processes boundaries.} \emph{Technical report
INRIA}, \textbf{RR-4366}.

\item  {Hall, P., Nussbaum, M. and Stern, S. E.} (1997) {On the estimation of
a support curve of indeterminate sharpness.} \emph{J. Multivariate Anal.},
\textbf{62}, 204--232.

\item { Hall, P., Park, B. U. and Stern, S. E.} (1998) { On polynomial
estimators of frontiers and boundaries.} \emph{J. Multivariate Anal.},
\textbf{66}, 71--98.

\item  {H\"{a}rdle, W.} (1990) \emph{Applied nonparametric regression},
Cambridge University Press, Cambridge.

\item { H\"{a}rdle, W., Hall, P. and Simar, L.} (1995a) { Iterated bootstrap
with application to frontier models.} \emph{J. Productivity Anal.},
\textbf{6}, 63--76.

\item { H\"{a}rdle, W., Park, B. U. and Tsybakov, A. B.} (1995b) { Estimation
of a non sharp support boundaries.} \emph{J. Multivariate Anal.}, \textbf{43}, 205--218.

\item { Hardy, A. and Rasson, J. P.} (1982) { Une nouvelle approche des
probl\`{e}mes de classification automatique.} \emph{Statistique et Analyse des
donn\'{e}es}, \textbf{7}, 41--56.

\item { Hartigan, J. A.} (1975) \emph{Clustering Algorithms.} Wiley, Chichester.

\item { Jacob, P. and Suquet, C.} (1995) { Estimating the edge of a Poisson
process by orthogonal series.} \emph{Journal of Statistical Planning and
Inference}, \textbf{46}, 215--234.

\item { Korostelev, A. P., Simar, L. and Tsybakov, A. B.} (1995) {Efficient
estimation of monotone boundaries.} \emph{The Annals of Statistics},
\textbf{23}, 476--489.

\item  Korostelev, A. P. and Tsybakov, A.B. (1993) { Minimax theory of image
reconstruction.} in \emph{Lecture Notes in Statistics}, \textbf{82},
Springer-Verlag, New York.

\item { Mammen, E. and Tsybakov, A. B.} (1995) {Asymptotical minimax recovery
of sets with smooth boundaries. } \emph{The Annals of Statistics},
\textbf{23}, 502--524.

\item  Menneteau, L. (2003a) Multidimentional limit theorems for smoothed
extreme value estimates of empirical point processes boundaries, submitted.

\item  Menneteau, L. (2003b) Some limit theorems for piecewise constant kernel
smoothed estimates of Poisson point process boundaries, submitted.

\item  Shorack, G. R. and Wellner, J.A. (1986) \ \emph{Empirical processes
with applications to statistics}, Wiley, New York.

\item  Tolstov, G.P. (1962) \ \emph{Fourier Series}, Dover Publications, New York.
\end{description}

\end{document}